\newcommand{\fl}{\longrightarrow}
\newfont{\bb}{msbm10 at 12pt}
\def\r{\hbox{\bb R}}
\def\h{\hbox{\bb H}}
\def\c{\hbox{\bb C}}
\def\s{\hbox{\bb S}}
\def\mm{\mathbb{M}\times\mathbb{R}}
\def\m{\mathbb{M}}
\def\mi{\mathbb{M}(\infty)}
\def\G{\Gamma}
\def\g{\gamma}
\def\P{{\cal P}}
\def\Si{\Sigma}
\newcommand{\ee}{\begin{equation}}
\newcommand{\fe}{\end{equation}}
\newcommand{\me}[2]{\langle #1,#2 \rangle }
\numberwithin{equation} {section}
\begin{document}

\theoremstyle{plain}\newtheorem{lem}{Lemma}[section]
\theoremstyle{plain}\newtheorem{pro}{Proposition}[section]
\theoremstyle{plain}\newtheorem{teo}{Theorem}[section]
\theoremstyle{plain}\newtheorem{eje}{Example}[section]
\theoremstyle{plain}\newtheorem{no}{Remark}[section]
\theoremstyle{plain}\newtheorem{cor}{Corollary}[section]

\begin{center}
\rule{15cm}{1.5pt} \vspace{.6cm}

{\Large \bf Minimal surfaces and harmonic diffeomorphisms \\[3mm] from the complex plane
onto a Hadamard surface.} \vspace{0.4cm}

\rule{15cm}{1.5pt}\vspace{.5cm}

{\bf José A. Gálvez$^a$\footnote{The first author is partially supported by
MEC-FEDER, Grant No MTM2007-65249.}\quad and\quad Harold Rosenberg$^b$}
\end{center}

\noindent{\small ${}^a$Departamento de Geometr\'\i
a y Topolog\'\i a, F. Ciencias, Universidad de Granada, 18071 Granada, Spain \\
e-mail: jagalvez@ugr.es\\
${}^b$  Institut de Mathématiques, Université Paris VII, 2 place Jussieu, 75005
Paris, France\\ e-mail: rosen@math.jussieu.fr} \vspace{8mm}

\noindent{\small {\bf Abstract.} We construct harmonic diffeomorphisms from the
complex plane $\c$ onto any Hadamard surface $\m$ whose curvature is bounded above
by a negative constant. For that, we prove a Jenkins-Serrin type theorem for minimal
graphs in $\mm$ over domains of $\m$ bounded by ideal geodesic polygons and show the
existence of a sequence of minimal graphs over polygonal domains converging to an
entire minimal graph in $\mm$ with the conformal structure of $\c$.}

\section{Introduction.}

There are many harmonic diffeomorphisms from the complex plane $\c$ onto the
hyperbolic plane $\h$. They were constructed by finding entire minimal graphs in
$\h\times\r$ whose conformal type is $\c$ \cite{CR}. The vertical projection of such
a graph onto $\h$ is such a harmonic diffeomorphism. It was conjectured that there
was no such map \cite{SY}.

In this paper we will show there are harmonic diffeomorphisms from $\c$ onto any
Hadamard surface whose curvature is bounded above by a negative constant. The
question of their existence was posed by R. Schoen.

We proceed as in \cite{CR} by constructing entire minimal graphs in $\mm$, of
conformal type $\c$; $\m$ a complete simply connected Riemannian surface with
curvature $K_{\m}\leq a<0$. The construction of these graphs in $\h\times\r$ can be
done in $\mm$; the geometry of the asymptotic boundary of $\m$ is sufficiently close
to that of $\h$.

We are thus able to prove a Jenkins-Serrin type theorem for minimal graphs in $\mm$,
over domains of $\m$ bounded by ideal geodesic polygons. There are several
constructions in our paper, which we believe will be useful for future research.

An interesting question is whether our theorems hold when $K_{\m}<0.$

\section{Preliminaries.}

We will devote this Section to present some basic properties of Hadamard manifolds,
which will be necessary for our study (see for instance \cite{E1,E2} for details).

Let $\m$ be a Hadamard manifold, that is, a complete simply connected Riemannian
manifold with non positive sectional curvature. It is classically known that there
is a unique geodesic joining two points of $\m$. Thus, the concept of (geodesic)
convexity is naturally defined for sets in $\m$.

We say that two geodesics $\g_1(t),\g_2(t)$ of $\m$, parametrized by arc length, are
asymptotic if there exists a constant $c>0$ such that the distance
$d(\g_1(t),\g_2(t))$ is less than $c$ for all $t\geq0$. Analogously, two unit
vectors $v_1,v_2$ are said to be asymptotic if the corresponding geodesics
$\g_{v_1}(t),\g_{v_2}(t)$ have this property.

To be asymptotic is an equivalence relation on the oriented unit speed geodesics or
on the set of unit vectors of $\m$. Each one of these equivalence classes will be
called a point at infinity, and $\mi$ will denote the set of points at infinity.

We will denote by $\g(+\infty)$ or $v(\infty)$ the equivalence class of the
corresponding geodesic $\g(t)$ or unit vector $v$.

When $\m$ is a Hadamard manifold with sectional curvature bounded from above by a
negative constant then any two asymptotic geodesics $\g_1,\g_2$ satisfy that the
distance between the two curves ${\g_1}_{|[t_0,+\infty)},{\g_2}_{|[t_0,+\infty)}$ is
zero for any $t_0\in\r$. In addition, under this curvature hypothesis, given
$x,y\in\mi$ there exists a unique oriented unit speed geodesic $\g$ such that
$\g(+\infty)=x$ and $\g(-\infty)=y$, where $\g(-\infty)$ is the corresponding point
at infinity when we change the orientation of $\g$.

For any point $p$ of a general Hadamard manifold, there is a bijective
correspondence between the set of unit vectors at $p$ and $\mi$, where a unit vector
$v$ is mapped to the point at infinity $v(\infty)$. Equivalently, given a point
$p\in\m$ and a point $x\in\mi$, there exists a unique oriented unit speed geodesic
$\g$ such that $\g(0)=p$ and $\g(+\infty)=x$. In particular, $\mi$ is bijective to a
sphere.

In fact, there exists a topology on $\m^\ast=\m\cup\mi$ satisfying
\begin{enumerate}
\item the restriction to $\m$ agrees with the topology induced by the Riemannian
distance,
\item there exists a homeomorphism from $\m^\ast$ onto the closed unit ball which
identifies $\mi$ with the unit sphere,
\item the map $v\rightarrow v(\infty)$ is a homeomorphism from the unit sphere of the
tangent plane at a fixed point $p$ onto $\mi$.
\end{enumerate}

This topology is called the cone topology of $\m^\ast$ and can be obtained as
follows. Let $p\in\m$ and ${\cal U}$ an open set in the unit sphere of its tangent
plane. Define for any $r>0$
$$
T({\cal U},r)=\{\g_v(t)\in\m^\ast:\ v\in{\cal U},\ r<t\leq+\infty\}.
$$
The cone topology is the unique one such that its restriction to $\m$ is the
topology induced by the Riemannian distance and such that the sets $T({\cal U},r)$
containing a point $x\in\mi$ form a neighborhood basis at $x$.

Given a set $A\subseteq\m$, we denote by $\partial_{\infty}A$ the set $\partial
A\cap\mi$, where $\partial A$ is the boundary of $A$ for the cone topology.

Horospheres are defined in terms of Busemann functions. Given a unit vector $v$, the
Busemann function $B_v:\m\fl\r$, associated to $v$, is
$$
B_v(p)=\lim_{t\rightarrow+\infty} d(p,\g_v(t))-t.
$$
This function verifies some important properties
\begin{enumerate}
\item $B_v$ is a ${\cal C}^2$ convex function on $\m$,
\item the gradient $\nabla B_v(p)$ is the unique unit vector $w$ at $p$ such that
$v(\infty)=-w(\infty)$,
\item if $w$ is a unit vector such that $v(\infty)=w(\infty)$ then $B_v-B_w$ is a
constant function on $\m$.
\end{enumerate}

Given a point $x\in\mi$ and a unit vector $v$ such that $v(\infty)=x$ we define the
horospheres at $x$ as the level sets of the Busemann function $B_v$. By property 3,
the horospheres at $x$ do not depend on the choice of $v$. The horospheres at a
point $x\in\mi$ give a foliation of $\m$ and, from property one, each one bounds a
convex domain in $\m$ called a horoball. Moreover, the intersection between a
geodesic $\g$ and a horosphere at $\g(+\infty)$ is always orthogonal from property
two.

With respect to distance from horospheres we present the following facts.
\begin{enumerate}
\item Let $p\in\m$, $H_x$ a horosphere at $x$ and $\g$ the geodesic passing through $p$
having $x$ as a point at infinity, then $H_x\cap\g$ is the closest point on $H_x$ to
$p$.
\item If $\g$ is a geodesic with points at infinity $x,y$, and $H_x$, $H_y$ are
disjoint horospheres at these points then the distance between $H_x$ and $H_y$
agrees with the distance between the points $H_x\cap\g$ and $H_y\cap\g$.
\item The function ${\cal D}: \m\times \m^\ast\times \m\fl\r$ given by
\ee
\label{D}
{\cal D}(a,b,c)=\left\{
\begin{array}
{ll} d(c,b)-d(a,b)&\mbox{if }b\in \m\\
B_v(c)%\lim_{t\rightarrow\infty}(d(c,\g_{ab}(t))-t)
&\mbox{if }b\in \m(\infty)
\end{array} \right.
\fe is continuous, where $v$ is the unique unit tangent vector at $a$ such that
$v(\infty)=b$. ${\cal D}(a,b,c)$ measures the diference between the oriented
distance from $a$ and $c$ to any horosphere at $b\in\mi$. In particular, ${\cal
D}(a,b,c)<0$ means that $c$ is in the horoball whose boundary is the horosphere at
$b$ passing across $a$.
\end{enumerate}

\section{A Jenkins-Serrin type theorem for ideal polygons.}

From now on we will assume $\m$ is a simply connected, complete surface with Gauss
curvature bounded from above by a negative constant.
%We start by defining the
%elementary tools in order to enunciate a Jenkins-Serrin theorem.

We say that $\Gamma$ is an {\it ideal polygon} if $\Gamma$ is a Jordan curve in
$\m^\ast$ which is a geodesic polygon with an even number of sides and all the
vertices in $\mi$. As usual, we will denote by $A_1, B_1,\ldots,A_k,B_k$ the sides
of $\G$, which are oriented counter-clockwise.

Now, we study the Dirichlet problem for the minimal surface equation in the domain
$D$ bounded by an ideal polygon $\G$. That is, we look for a solution $u:D\fl\r$ to
the equation
\begin{equation}\label{minimal}
\mbox{div}\left(\frac{\nabla u}{\sqrt{1+|\nabla u|^2}}\right)=0.
\end{equation}
Here, we prescribe the $+\infty$ data on each side $A_i$ and $-\infty$ on each side
$B_i$.

For relatively compact domains $D\subseteq \m$, it is well-known that there are
necessary and sufficient conditions on the lengths of the sides of polygons
inscribed in $\Gamma$ in order to solve this Dirichlet problem (see \cite{JS},
\cite{NR}, \cite{P}).

When $\G$ is an ideal polygon the length of each side is infinity and the previous
conditions make no sense. However, in \cite{CR}, the authors devise a manner to
compare the ``lengths'' of sides.

Fix an ideal polygon $\G$ and consider pairwise disjoint horocycles $H_i$ at each
vertex $a_i$ of $\G$.

For each side $A_i$, let us denote by $\widetilde{A}_i$ the compact geodesic arc
between the horocycles at the vertices of $A_i$, and by $|A_i|$ the length of
$\widetilde{A}_i$, that is, the distance between the horocycles. Analogously, one
defines $\widetilde{B}_i$ and $|B_i|$ for each side $B_i$, (cf. Figure 1).

%\begin{center}
%\DeclareGraphicsExtensions{jpg}
%\includegraphics[height=5cm]{dibujo10b.jpg}\\
%Figure 1.
%\end{center}
\begin{figure}[h]
\mbox{}
\begin{center}
\includegraphics[height=5cm]{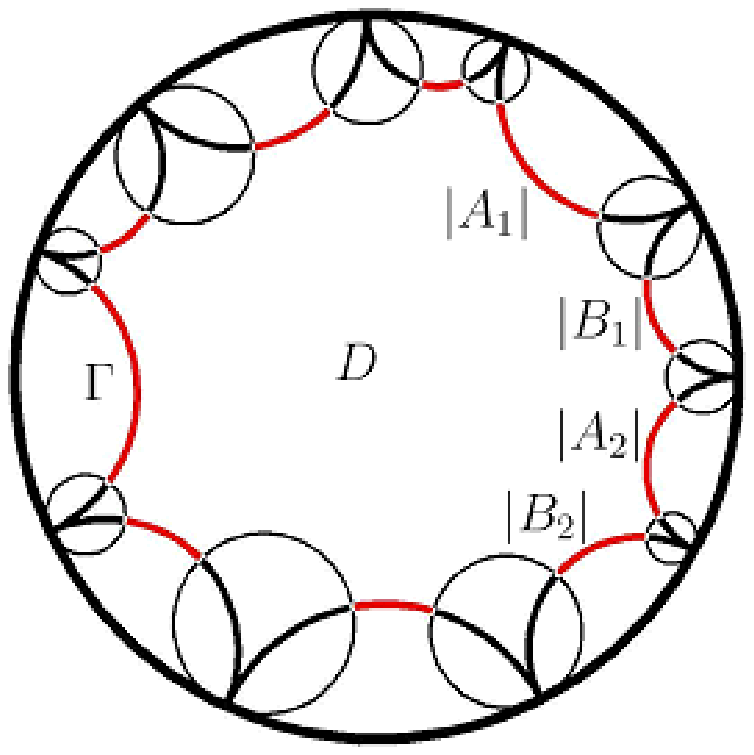}\\
Figure 1.
\end{center}
\end{figure}

Observe that if we define
$$
a(\G)=\sum_{i=1}^{k}|A_i|,\qquad b(\G)=\sum_{i=1}^{k}|B_i|,
$$
then $a(\G)-b(\G)$ does not depend on the choice of horocycles. This is due to the
fact that if we change a horocycle at a vertex then $a(\G)$ and $b(\G)$ increase or
decrease in the same quantity.

Let $D$ be the domain bounded by an ideal polygon $\G$. We say that a simple closed
geodesic polygon ${\cal P}$ is {\it inscribed} in $D$ if each vertex of $\P$ is a
vertex of $\G$.

Each side of $\P$ is one side $A_i$ or $B_i$ of $\G$, or a geodesic contained in $D$
(cf. Figure 2). Thus, the definition of $a(\G)$ and $b(\G)$ extends to $\P$. In
addition, we define the {\it truncated length} of the inscribed polygon $|\P|$ as
the sum of the lengths of the compact arcs of each side of $\P$ bounded by the
horocycles at its vertices.

%\begin{center}
%\DeclareGraphicsExtensions{jpg}
%\includegraphics[height=5cm]{dibujo11b.jpg}\\
%Figure 2.
%\end{center}
\begin{figure}[h]
\mbox{}
\begin{center}
\includegraphics[height=5cm]{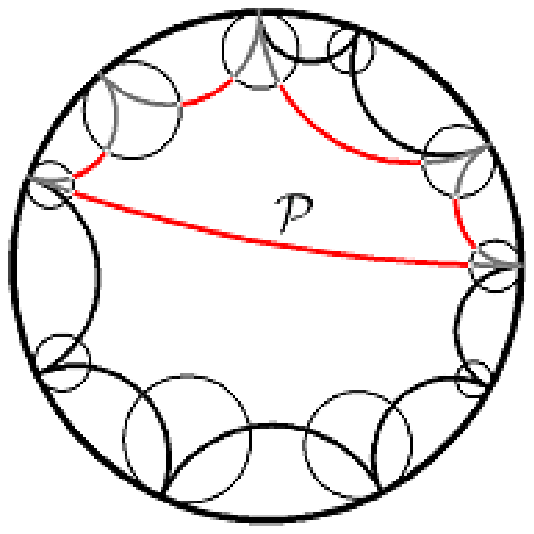}\\
Figure 2.
\end{center}
\end{figure}

Now, we can state a Jenkins-Serrin type theorem on domains of $\m$ bounded by an
ideal polygon $\G$.

\begin{teo}\label{JS}
There is a solution to the Dirichlet problem for the minimal surface equation in the
domain $D$ bounded by $\G$ with prescribed data $+\infty$ at $A_i$ and $-\infty$ at
$B_i$ if, and only if, the following two conditions are satisfied
\begin{enumerate}
\item $a(\G)-b(\G)=0$,
\item for all inscribed polygons $\P$ in $D$ different from $\G$ there exist
horocycles at the vertices such that
$$
2\,a(\P)<|\P|\qquad\mbox{and}\qquad 2\,b(\P)<|\P|.
$$
\end{enumerate}

Moreover, the solution is unique up to additive constants.
\end{teo}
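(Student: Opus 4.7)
The plan is to follow the Jenkins--Serrin method as refined by Collin and Rosenberg \cite{CR} for $\h\times\r$, adapting each step to the Hadamard surface $\m$ with $K_\m\le a<0$. Both directions rest on the flux of the conjugate $1$-form of a minimal graph: given a solution $u$ of \eqref{minimal}, write $X_u=\nabla u/\sqrt{1+|\nabla u|^2}$ and let $\omega_u$ be the $1$-form dual to the $\pi/2$-rotate of $X_u$. Then $\omega_u$ is closed, satisfies $|\omega_u|<1$ pointwise, and for any oriented arc $\alpha$ the integral $\int_\alpha\omega_u$ tends to $+\mathrm{length}(\alpha)$ as $u\to+\infty$ along $\alpha$ and to $-\mathrm{length}(\alpha)$ as $u\to-\infty$.

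For necessity, suppose $u$ solves the prescribed Dirichlet problem. Fix pairwise disjoint horocycles $H_i$ at the ideal vertices $a_i$ and truncate $\G$ by removing its pieces inside the horoballs; what remains is the compact geodesic arcs $\widetilde A_i,\widetilde B_i$ together with short horocyclic arcs $c_i$ at each vertex. Integrating the closed form $\omega_u$ around the truncated boundary gives $0$. Letting the horocycles approach the ideal vertices, the arcs $c_i$ shrink to zero length (this is exactly where $K_\m\le a<0$ enters, via the asymptotic-geodesic property recalled in Section~2), so the horocyclic contributions disappear while the $\widetilde A_i$-contributions tend to $a(\G)$ and the $\widetilde B_i$-contributions tend to $-b(\G)$; this yields condition~1. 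For an inscribed polygon $\P\ne\G$, the same calculation applied to the subdomain of $D$ bounded by $\P$ produces a flux identity in which each side of $\P$ lying on some $A_i$ (respectively $B_i$) contributes its full truncated length (respectively its negative), while each interior geodesic side of $\P$ contributes strictly less in absolute value than its truncated length, because $u$ is finite there and hence $|\omega_u|<1$ on a neighborhood. Reading the identity off twice, once for each sign choice, gives precisely $2a(\P)<|\P|$ and $2b(\P)<|\P|$.

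For sufficiency, assume both conditions. Exhaust $D$ by relatively compact domains $D_n$ obtained by truncating $\G$ with horocycles that approach each ideal vertex as $n\to\infty$ and joining consecutive endpoints by short arcs inside $D$. On each $D_n$ prescribe boundary data $+n$ on the truncated $A_i$-pieces, $-n$ on the truncated $B_i$-pieces, and continuous interpolation on the connecting arcs; the curvature bound $K_\m\le a<0$ supplies the local barriers (via comparison with $\h$) needed for the Perron method, so a unique solution $u_n:D_n\to\r$ exists. After normalizing $u_n(p_0)=0$ at a fixed $p_0\in D$, standard interior gradient estimates and a diagonal argument yield a subsequence whose divergence sets $U^\pm=\set{p\in D:u_n(p)\to\pm\infty}$ are open with boundaries (in $D$) made of complete geodesics having endpoints at vertices of $\G$, while on $D\setminus(U^+\cup U^-)$ the sequence converges locally uniformly to a solution $u$ of \eqref{minimal}. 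If $U^+$ or $U^-$ were nonempty, a component would be bounded by an inscribed polygon $\P\ne\G$ whose flux identity would force equality in place of the strict inequality of condition~2, a contradiction. Hence $U^\pm=\emptyset$ and $u_n\to u$ on all of $D$, with the correct $\pm\infty$ behaviour along the sides of $\G$ inherited from the boundary data and guaranteed globally by condition~1.

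The main obstacle is the divergence-set analysis: one must show that $\partial U^\pm\cap D$ really is a finite union of complete geodesics with vertices among the $a_i$ (rather than a more pathological limit set) and that the flux of $\omega_u$ along these limit geodesics actually saturates the strict inequality of condition~2. This requires gradient bounds uniform in $n$ up to the horocyclic caps, a precise handling of those caps using $K_\m\le a<0$, and the structural results on divergence lines developed in \cite{CR,NR,JS}. Uniqueness up to additive constants then follows as a corollary: for two solutions $u_1,u_2$ the flux of $\omega_{u_1}-\omega_{u_2}$ around $\G$ vanishes by condition~1, and the strict inequalities of condition~2 rule out any proper inscribed polygon bounding a component of $\set{u_1\ne u_2+\mathrm{const}}$, forcing $u_1-u_2$ to be constant.
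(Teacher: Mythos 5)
Your outline correctly reproduces the Collin--Rosenberg/Jenkins--Serrin architecture (flux identities for necessity, exhaustion plus divergence-set analysis for sufficiency, flux again for uniqueness), and the paper indeed follows exactly that route: its proof of the theorem consists of the single sentence ``the reader can check [the CR] proof works for a general surface $\m$ once the existence of a Scherk type surface on each halfspace of $\m$ is established,'' followed by the construction of that Scherk surface. So the approach matches.

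The gap is that your proposal never actually supplies the one ingredient the paper identifies as genuinely new. In $\h\times\r$ the CR argument uses the Abresch--Sa Earp Scherk graph on a halfplane (the solution which is $+\infty$ on a complete geodesic $\g$ and decays to $0$ away from $\g$) as a \emph{global} barrier: it is what gives uniform gradient estimates up to the ideal boundary for the approximating solutions $u_n$, what forces the limit to attain the prescribed $\pm\infty$ data on each $A_i$, $B_i$, and what underlies the maximum principle over admissible domains. You gesture at ``local barriers via comparison with $\h$'' for the Perron step and at ``precise handling of the horocyclic caps,'' but these are not equivalent: local barriers give solvability on the compact approximating domains (which is standard) and say nothing about the behavior as $n\to\infty$ near $\partial_\infty D$. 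The substance of the paper's proof is Proposition~\ref{megascherk}: one introduces Fermi coordinates $(s,t)$ along $\g$ with metric $ds^2+G(s,t)\,dt^2$, picks a constant $d$ with $K_\m\le c<d<0$, and shows via the Riccati comparison
$$
\left(\frac{G_s}{G}\right)^2+2\left(\frac{G_s}{G}\right)_s>
\left(\frac{\widetilde G_s}{\widetilde G}\right)^2+2\left(\frac{\widetilde G_s}{\widetilde G}\right)_s,
\qquad \widetilde G=\cosh^2(\sqrt{-d}\,s),
$$
that the hyperbolic Scherk profile $\widetilde h(s)=-\tfrac{1}{\sqrt{-d}}\log\tanh(\tfrac12\sqrt{-d}\,s)$ is a strict supersolution on the half of $\m$ cut off by $\g$. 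Solving Plateau problems with $\widetilde h$ as an upper barrier then produces the Scherk-type minimal graph on $\Omega$, and with that in hand the remainder of CR transfers verbatim. Without this construction your outline reduces to a summary of \cite{CR} together with an unverified claim that comparison with $\h$ gives what is needed; making that claim precise \emph{is} the proof.
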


\begin{no}\label{c2}
Notice that $a(\P)$ and $b(\P)$ depend on the chosen horocycles at the vertices.
However, if condition 2 is satisfied for a particular choice of horocycles then it
is also satisfied for all smaller horocycles at the vertices.

In addition, let $A_i, B_j$ be the two sides of $\G$ with a common vertex of $\P$.
If the side $A_i$ does not belong to $\P$ then $2\,a(\P)<|\P|$ is satisfied for the
choice of a small horocycle at the vertex. Thus, if $\P$ is an inscribed polygon in
$D$ such that there exists a vertex of $\P$ not containing the adjacent side $A_i$
of $\G$ and another vertex of $\P$ not containing the adjacent side $B_{i'}$ of
$\G$, then condition 2 is satisfied for the polygon $\P$.
\end{no}

\noindent {\it Proof of Theorem \ref{JS}.} This Theorem was proved in \cite{CR} when
$\m$ is the hyperbolic plane $\h$. The reader can check their proof works for a
general surface $\m$ once the existence of a Scherk type surface on each halfspace
of $\m$ is established.

This Scherk type surface in $\h$ is unique up to isometries of the ambient space and
was explicitly computed by U. Abresch and R. Sa Earp \cite{S}. For a general $\m$ we
now show its existence.

\begin{pro}\label{megascherk}
Let $\g$ be a complete geodesic in $\m$ and $\Omega$ a connected component of
$\m-\g$. There exists a positive solution $u$ to the Dirichlet problem for the
minimal surface equation in $\Omega$ with prescribed data $+\infty$ at $\g$ and such
that
$$
\lim \{u(p_n)\}=0
$$
for each sequence $\{p_n\}$ of points in $\Omega$ with distance to $\g$ going to
infinity.
\end{pro}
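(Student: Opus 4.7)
The plan is to realize the Scherk-type solution $u$ as the monotone limit of solutions of the minimal surface Dirichlet problem on an exhausting sequence of relatively compact subdomains of $\Omega$, with boundary data that grow to $+\infty$ along $\gamma$ and vanish elsewhere, the whole family being controlled from above by a uniform barrier built by comparison with the model plane of constant curvature $a$.

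First I would fix a point $p_0\in\gamma$ and, for each $n$, let $\Omega_n\subset\Omega$ be the relatively compact convex domain whose boundary consists of the subarc $\gamma_n\subset\gamma$ of length $2n$ centred at $p_0$, two geodesic segments of length $n$ perpendicular to $\gamma$ at the endpoints of $\gamma_n$, and the arc of the equidistant $\{d(\cdot,\gamma)=n\}\cap\Omega$ joining their tips. Classical elliptic theory on convex domains with bounded continuous data produces a unique minimal graph $u_n\geq 0$ on $\Omega_n$ with $u_n=n$ on $\gamma_n$ and $u_n=0$ on the rest of $\partial\Omega_n$ (the mild issues at the four corners, where the data is discontinuous, are handled by approximation). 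By the maximum principle $u_n\leq u_{n+1}$ wherever both are defined.

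The core of the argument is the upper barrier. Let $v_a:(0,\infty)\to(0,\infty)$ denote the explicit Abresch--Sa Earp Scherk profile in the model plane $\mathbb{H}^2_a$ of constant curvature $a$: a decreasing function with $v_a(0^+)=+\infty$, $v_a(+\infty)=0$, such that $v_a\circ d_{\mathbb{H}^2_a}(\cdot,\gamma_0)$ is a minimal graph over a halfplane bounded by a geodesic $\gamma_0$. Setting $\phi=v_a\circ d_\m(\cdot,\gamma)$, a direct computation using $|\nabla d_\m|=1$ yields
\[
\mathrm{div}\!\left(\frac{\nabla\phi}{\sqrt{1+|\nabla\phi|^2}}\right)=\frac{v_a''+v_a'\,\Delta d_\m\,(1+(v_a')^2)}{(1+(v_a')^2)^{3/2}}.
\]
The Hessian comparison theorem applied to the distance from a geodesic, combined with $K_\m\leq a<0$, gives $\Delta d_\m\geq\sqrt{-a}\tanh(\sqrt{-a}\,d_\m)$, precisely the value of $\Delta d$ realized in $\mathbb{H}^2_a$; since $v_a'<0$ and $v_a\circ d$ is minimal in $\mathbb{H}^2_a$, the displayed expression is $\leq 0$. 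Hence $\phi$ is a supersolution on $\Omega$ with $\phi=+\infty$ on $\gamma$ and $\phi\geq 0=u_n$ on the rest of $\partial\Omega_n$, and comparison yields $u_n\leq\phi$ on $\Omega_n$ for every $n$.

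Combining monotonicity with this uniform bound, the pointwise limit $u=\lim u_n$ exists on $\Omega$; the convergence is smooth on compact subsets by standard compactness for minimal graphs with locally uniform $C^0$ bounds, and $u$ satisfies (\ref{minimal}). The sandwich $0\leq u\leq\phi$ forces $u(p_n)\to 0$ whenever $d_\m(p_n,\gamma)\to\infty$. The boundary value $+\infty$ along $\gamma$ is obtained by comparing $u_n$, for large $n$, with local solutions of the minimal surface equation on small subdomains of $\Omega$ abutting a short arc of $\gamma$ around each $p\in\gamma$, with prescribed boundary value $M$ on that arc and $0$ elsewhere; such local solutions exist by classical elliptic theory and force $u\geq M$ in a one-sided neighborhood of $p$, for every $M>0$. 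The main obstacle is the upper-barrier construction, and in particular the Hessian comparison $\Delta d_\m\geq\sqrt{-a}\tanh(\sqrt{-a}\,d_\m)$: both this comparison and the decay $v_a(d)\to 0$ as $d\to\infty$ depend crucially on the strictly negative curvature upper bound $a<0$, which is exactly why the proposition requires this hypothesis rather than merely $K_\m<0$.
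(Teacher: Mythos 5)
Your proof is correct and takes essentially the same route as the paper: the Abresch--Sa Earp profile precomposed with the distance to $\gamma$ is used as an upper barrier, established via a curvature comparison for the Laplacian of the distance function, and the solution is obtained as the monotone limit of Dirichlet solutions on an exhaustion of $\Omega$ by convex compact subdomains carrying data $n$ on $\gamma$ and $0$ on the outer boundary. The paper phrases the barrier argument in Fermi coordinates as a Riccati-type comparison for $G_s/G$ (which equals $2\Delta d_{\m}$) and produces the approximating graphs via the Plateau problem and Rad\'o's theorem, but these are only cosmetic variants of your Hessian-comparison and direct Dirichlet-problem formulation, and your extra local lower-barrier step for the boundary value $+\infty$ along $\gamma$ is already implied by the monotonicity of $u_n$.
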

\begin{proof}
Since $\m$ is a Hadamard surface then
$$
\varphi(s,t)=\exp_{\g(t)}(s\ J\g'(t)), \qquad (s,t)\in\r^2
$$
is a global parametrization of $\m$, where the geodesic $\g(t)$ is parametrized by
arc length, $\exp$ is the usual exponential map and $J$ stands for the rotation in
$\m$ by $\pi/2$. In addition, we can assume that $\Omega$ is parametrized for $s>0$.

We observe that
$$
\langle\frac{\partial\ }{\partial s},\frac{\partial\ }{\partial s}\rangle=1,\qquad
\langle\frac{\partial\ }{\partial s},\frac{\partial\ }{\partial t}\rangle=0
$$
where $\langle,\rangle$ is the induced metric in $\m$. Moreover, if we denote by
$G(s,t)$ the function $\langle\frac{\partial\ }{\partial t},\frac{\partial\
}{\partial t}\rangle$ then
\begin{equation}
\label{3} G(0,t)=1,\quad G_s(0,t)=0,\qquad t\in\r
\end{equation}
since $\g(t)$ is a geodesic. Here, $G_s$ denotes the derivative of $G$ with respect
to $s$.

Now, we consider a graph $\psi(s,t)=(\varphi(s,t),h(s))$ on $\Omega$ which has
constant height for equidistant points to $\g$, that is, when $s$ is constant.

For the unit normal of the graph pointing down, the mean curvature of the immersion
is positive if and only if
\begin{equation}
\label{5} G_s\,h_s(1+h_s^2)+2G\,h_{ss}< 0,\qquad s>0,t\in\r.
\end{equation}

On the other hand, the Gauss curvature of $\m$ is given by
\begin{equation}
\label{1}
K(s,t)=-\frac{1}{4}\left(\frac{G_s}{G}\right)^2-\frac{1}{2}\left(\frac{G_s}{G}\right)_s.
\end{equation}

We notice that, for any constant $d<0$, the function
$\widetilde{G}(s)=\cosh^2(\sqrt{-d}\,s)$ verifies
\begin{equation}
\label{2}
d=-\frac{1}{4}\left(\frac{\widetilde{G}_s}{\widetilde{G}}\right)^2-
\frac{1}{2}\left(\frac{\widetilde{G}_s}{\widetilde{G}}\right)_s.
\end{equation}
Observe that $ds^2+\widetilde{G}(s)\,dt^2$ is the hyperbolic metric of curvature
$d$. Moreover, the function
\begin{equation}
\label{h}
\widetilde{h}(s)=-\frac{1}{\sqrt{-d}}\
\log\left(\tanh\left(\frac{1}{2}\,\sqrt{-d}\,s\right)\right), \qquad s>0,
\end{equation}
is decreasing and satisfies
\begin{equation}
\label{7}
\widetilde{G}_s\,\widetilde{h}_s(1+\widetilde{h}_s^2)+2\widetilde{G}\,\widetilde{h}_{ss}=
0.
\end{equation}
That is, $\widetilde{h}(s)$ is the minimal graph in the hyperbolic space found by
Abresch and Sa Earp.

Since $K(s,t)$ is bounded from above by a negative constant $c$, we can choose $d$
such that $c<d<0$. Then from (\ref{1}) and (\ref{2})
\begin{equation}
\label{4} \left(\frac{G_s}{G}\right)^2+2\left(\frac{G_s}{G}\right)_s>
\left(\frac{\widetilde{G}_s}{\widetilde{G}}\right)^2+
2\left(\frac{\widetilde{G}_s}{\widetilde{G}}\right)_s.
\end{equation}

Now, we observe that given two real functions $f(x),g(x)$ defined on an interval
$I$, with $f(x_0)=g(x_0)$ and satisfying
$$
2f'(x)+f(x)^2>2g'(x)+g(x)^2,
$$
then $f(x)> g(x)$, for all $x> x_0$ on $I$.

Thus, from (\ref{3}), (\ref{7}) and (\ref{4}),
$$
\frac{G_s}{G}>
\frac{\widetilde{G}_s}{\widetilde{G}}=\frac{-2\widetilde{h}_{ss}}{\widetilde{h}_s(1+\widetilde{h}_s^2)},\qquad
s>0,t\in\r
$$
or equivalently, $h=\widetilde{h}$ satisfies the inequality in (\ref{5}). That is,
the graph $\psi(s,t)=(\varphi(s,t),\widetilde{h}(s))$ on $\Omega$ has strictly
positive mean curvature for its unit normal pointing down. This graph has value
$+\infty$ on $\g$ and goes to zero when the distant to $\g$ tends to infinity.

Finally, we obtain the minimal graph with the desired properties as follows. Let
$p\in\g\subseteq \m$, $C(n)$ the geodesic circumference in $\m$ centered at $p$ and
radius $n$, $A(n)=C(n)\cap\Omega$ and $B(n)$ the segment $\g([-n,n])$. Now, consider
the Jordan curve $\Gamma(n)$ in $\m\times\r$ obtained by the arcs $A(n)\times\{0\}$,
$B(n)\times\{n\}$ and the vertical segments joining their end points. Let $\Sigma_n$
be the minimal disk which is a solution of the Plateau problem for $\Gamma(n)$.

$\Sigma_n$ is the  graph of a function $u_n$ on the domain bounded by $A(n)\cup
B(n)$ with $u_{n|A(n)}=0$ and $u_{n|B(n)}=n$. The sequence $\{u_n\}$ is non
decreasing and non negative. In addition, from the comparison  principle, it is
bounded from above by $\widetilde{h}$. Therefore, $\Sigma_n$ converges to a minimal
graph on $\Omega$. This completes the proof of Proposition \ref{megascherk}; hence,
Theorem \ref{JS} as well.
\end{proof}

%For the study of the Dirichlet problem for the minimal surface equation and some non
%infinity values at its boundary, we consider more general domains.
As in \cite{CR}, we can extend Theorem \ref{JS} to more general domains. We say that
a convex domain $D\subseteq\m$ is {\it admissible} if
\begin{enumerate}
\item the (non empty) finite set $\partial_\infty D$ are the vertices of an ideal
polygon,
\item given two convex arcs $C_1,C_2\subseteq\partial D$ with a common vertex $x\in \partial_\infty D$
there exist two sequences of points $x_n\in C_1$, $y_n\in C_2$ converging to $x$,
such that the distance between $x_n$ and $y_n$ tends to zero.
\end{enumerate}

The second condition in the above definition is used in order to obtain a maximum
principle for minimal graphs over the domain $D$. Moreover, the domain bounded by an
ideal polygon is admissible since the distance between two geodesics with a common
point at infinity goes to zero.

Now, we present a Jenkins-Serrin type theorem when we fix continuous boundary data
on some components of the boundary, whose proof can be shown as in \cite{CR}.

Let $D$ be an admissible domain in $\m$. We seek a solution to the Dirichlet problem
for the minimal surface equation in $D$ which is $+\infty$ on geodesic sides
$A_1,\ldots,A_k$ of $\partial D$, and equals $-\infty$ on other geodesic sides
$B_1,\ldots,B_{k'}$ of $\partial D$,  and equal to continuous functions
$f_i:C_i\fl\r$ on the remaining (nonempty) convex arcs of $\partial D$.

\begin{teo}
There exists a unique solution in $D$ to the above Dirichlet problem if, and only
if,
$$
2a(\P)<|\P|\qquad\mbox{and}\qquad 2b(\P)<|\P|
$$
for all inscribed polygon $\P$ in $D$.
\end{teo}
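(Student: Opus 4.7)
The plan is to adapt the argument from \cite{CR} in the hyperbolic setting, which is now available on a general $\m$ thanks to Theorem \ref{JS} and Proposition \ref{megascherk}. First I would dispose of uniqueness and necessity. Uniqueness is the standard maximum principle for the minimal surface equation, with the proviso that two solutions cannot separate at an ideal vertex of $D$: this is exactly what the second condition in the definition of admissible guarantees, since the sequences $x_n\in C_1$, $y_n\in C_2$ approaching a common ideal vertex with $d(x_n,y_n)\to 0$ force the difference of two solutions to tend to zero at that vertex. For necessity, given an inscribed polygon $\P$ I would apply the divergence theorem to $X_u=\nabla u/\sqrt{1+|\nabla u|^2}$, which has norm strictly less than $1$, on the region bounded by $\P$ truncated by horocycles; the flux along an $A_i$ side equals the truncated length, along a $B_i$ side minus that length, and along every other side it is strictly less in absolute value. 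Summing yields $2a(\P)<|\P|$ and $2b(\P)<|\P|$ as in \cite{CR}.

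For existence I would exhaust $D$ by compact convex subdomains $D_n$: truncate each unbounded geodesic side $A_i, B_i$ of $\partial D$ at horocycles of increasing radius, and close off $D_n$ by short convex arcs along these horocycles. On $\partial D_n$ prescribe the value $n$ on the truncated $A_i$'s, the value $-n$ on the truncated $B_i$'s, the data $f_i$ on the compact part of each $C_i$, and interpolate continuously on the horocyclic caps. The classical compact Jenkins-Serrin theory on $\m$, combined with the barriers below, produces a unique solution $u_n$ on $D_n$. Standard gradient estimates for the minimal surface equation then yield a subsequence converging on compact subsets of $D$ either to a minimal graph $u:D\fl\r$ or, after subtracting normalizing constants $c_n$, to a solution on some open subset $D'\subsetneq D$.

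The boundary behavior of the limit on the $A_i$ and $B_i$ sides is obtained by sandwiching $u_n$ between vertically translated copies of the Scherk-type graphs provided by Proposition \ref{megascherk}: these attain $\pm\infty$ along a prescribed geodesic and decay to $0$ away from it, giving barriers at every point of $A_i\cup B_i$. On the continuous-data arcs $C_i$ one employs the standard local barriers built from small geodesic circles in $\m$, which behave as desired since $K_\m\leq a<0$, together with the prescribed continuous boundary values $f_i$.

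The main obstacle, and what the inscribed polygon condition is tailored to resolve, is ruling out the second alternative above: that $u_n-c_n$ converges only on a proper subdomain $D'\subsetneq D$, with an interior boundary component $\ell\subset D$ along which $u_n-c_n\to +\infty$ on one side and $-\infty$ on the other. A convex hull argument shows any such $\ell$ must be a geodesic of $\m$, and one can then assemble these interior geodesics with sides of $\G$ into an inscribed polygon $\P$. Applying the flux identity for $X_{u_n-c_n}$ on the region enclosed by $\P$ and letting $n\to\infty$ forces either $2a(\P)=|\P|$ or $2b(\P)=|\P|$, contradicting the hypothesis. This dichotomy/flux argument is the heart of the proof and the essential input beyond what Theorem \ref{JS} already provides.
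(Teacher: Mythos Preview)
Your proposal is correct and aligns with the paper's approach: the paper itself offers no independent proof of this theorem, stating only that its ``proof can be shown as in \cite{CR}'' (with uniqueness handled by the Generalized Maximum Principle stated immediately afterward), and what you have sketched is precisely the Collin--Rosenberg argument---flux for necessity, exhaustion with Scherk barriers from Proposition~\ref{megascherk} for existence, and the monotone-convergence/divergence-set dichotomy ruled out by the inscribed-polygon flux contradiction. In other words, you have supplied the details the paper elects to omit by citation, and the two approaches are the same.
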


The uniqueness property in the two previous Theorems is guaranteed by a maximum
principle over admissible domains. The proof when $\m=\h$ was given in \cite{CR} and
it also works in our situation.

\begin{teo}
{\bf (Generalized Maximum Principle)} Let $D\subseteq\m$ be an admissible domain.
Let us consider a domain $\Omega\subseteq D$ and $u,v\in{\cal
C}^0(\overline{\Omega})$ two solutions to the minimal surface equation in $\Omega$
with $u\leq v$ on $\partial \Omega$. Then $u\leq v$ in $\Omega$.
\end{teo}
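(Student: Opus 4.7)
The plan is to suppose for contradiction that $u(p_0)>v(p_0)$ at some $p_0\in\Omega$, fix $\epsilon\in(0,u(p_0)-v(p_0))$, and derive a contradiction by a divergence-theorem argument on the super-level set $W=\{x\in\Omega:u(x)-v(x)>\epsilon\}$. Since $u,v\in C^0(\overline\Omega)$ and $u\leq v$ on $\partial\Omega$, the boundary $\partial W\cap\m$ is contained in the level set $\{u-v=\epsilon\}$; the only obstruction to $W$ being relatively compact in $\m$ is that $\overline W$ may accumulate at some of the ideal vertices $x_1,\dots,x_m$ of the polygon bounding $D$.

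Setting $X_u=\nabla u/\sqrt{1+|\nabla u|^2}$ and $X_v$ analogously, the minimal surface equation reads $\operatorname{div}X_u=\operatorname{div}X_v=0$, and strict convexity of $\zeta\mapsto\sqrt{1+|\zeta|^2}$ gives the pointwise bound $(\nabla u-\nabla v)\cdot(X_u-X_v)\geq 0$ with equality only where $\nabla u=\nabla v$. The obvious test field $Y=(u-v-\epsilon)(X_u-X_v)$ works when $W$ is relatively compact but fails in general because $u-v$ might blow up at the ideal vertices and swamp the decay of short cutoff arcs. I would instead use a \emph{bounded} cutoff: pick a smooth non-decreasing $\phi:\r\to[0,1]$ with $\phi\equiv 0$ on $(-\infty,\epsilon]$ and $\phi'>0$ on $(\epsilon,\infty)$, and set $Y=\phi(u-v)(X_u-X_v)$, so that $\operatorname{div}Y=\phi'(u-v)(\nabla u-\nabla v)\cdot(X_u-X_v)\geq 0$.

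Next I truncate via admissibility. At each relevant $x_j$, condition~2 produces geodesic segments $\sigma_n^j\subseteq\overline D$ of length $|\sigma_n^j|\to 0$ joining the two convex arcs of $\partial D$ meeting at $x_j$, and cutting off shrinking cusp neighborhoods $V_n^j$. Set $\Omega_n=\Omega\setminus\bigcup_j V_n^j$, relatively compact in $\m$, with $\partial\Omega_n\subseteq\partial\Omega\cup\bigcup_j\sigma_n^j$. The divergence theorem on $\Omega_n$ gives
\[
\int_{\Omega_n}\operatorname{div}Y=\int_{\partial\Omega\cap\overline{\Omega_n}}\phi(u-v)\langle X_u-X_v,\nu\rangle+\sum_j\int_{\sigma_n^j\cap\Omega}\phi(u-v)\langle X_u-X_v,\nu\rangle.
\]
The first term vanishes because $u-v\leq 0<\epsilon$ on $\partial\Omega$ forces $\phi(u-v)=0$, and the second sum is dominated in absolute value by $2\sum_j|\sigma_n^j|\to 0$ using $|\phi|\leq 1$ and $|X_u-X_v|\leq 2$. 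Hence $\int_{\Omega_n}\operatorname{div}Y\to 0$, and monotone convergence of the non-negative integrand yields $\int_\Omega\operatorname{div}Y=0$, forcing $\nabla u\equiv\nabla v$ throughout $W$.

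The hardest point of the argument is precisely this replacement of the linear cutoff by a bounded one, needed because $u-v$ is not known to be bounded on the short geodesics $\sigma_n^j$. With $\nabla u\equiv\nabla v$ on $W$, the function $u-v$ is locally constant on every connected component $W_0$ of $W$. Since $W_0$ is open and $W_0\subseteq\Omega\subsetneq\m$ with $\m$ connected, $W_0$ cannot also be closed in $\m$, so $\partial W_0\cap\m\neq\emptyset$; by continuity and $\partial W\cap\m\subseteq\{u-v=\epsilon\}$, the constant value on $W_0$ must equal $\epsilon$, contradicting $u-v>\epsilon$ throughout $W_0$ and completing the proof.
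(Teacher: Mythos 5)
Your proof is correct, and it follows essentially the same flux argument that the paper delegates to Collin--Rosenberg \cite{CR}: test the divergence-free field $X_u-X_v$ against a bounded non-decreasing cutoff of $u-v$, use condition~2 of admissibility to truncate near the ideal vertices by geodesic arcs $\sigma_n^j$ of vanishing length so that the boundary flux tends to zero, and conclude $\nabla u\equiv\nabla v$ on $W=\{u-v>\epsilon\}$, which contradicts continuity at $\partial W$. One small refinement worth making: apply the divergence theorem on $W\cap\Omega_n$ with $\epsilon$ chosen a regular value of $u-v$ (rather than on $\Omega_n$ itself), so the boundary consists of the smooth level curve $\{u-v=\epsilon\}$, where $Y$ vanishes since $\phi(\epsilon)=0$, together with the short arcs $\sigma_n^j$; this sidesteps any regularity question about $\partial\Omega$, and your observation that the bounded cutoff $\phi$ is needed because $u-v$ need not stay bounded near an ideal vertex is exactly the right point.
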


In addition, as it was explained in \cite{CR}, the previous maximum principle also
applies when the solutions $u,v$ to the minimal surface equation have infinite
boundary values along some geodesic arcs of $\partial D$. When $\partial D$ is made
of complete geodesics and $u$, $v$ take the same infinite value on the whole
boundary of $D$, then $u$ and $v$ agree up to an additive constant.

\section{Extending the domain of a solution.}

The aim of this section is to show that a solution $u$ to the minimal surface
equation on an admissible polygonal domain $D$ with infinity boundary data can be
``extended'' to a larger polygonal domain. By this we mean that a solution $v$ on a
larger domain can be chosen arbitrarily close to $u$ on a fixed compact set $K$ of
$D$.

To establish this result, we first need to show the existence of some special ideal
quadrilaterals. We will call {\it ideal Scherk} surface a graph given by Theorem
\ref{JS}, and {\it Scherk domain} the domain where it is defined.

\begin{pro}\label{is}
Let $x,y,z$ be three points in $\mi$. Let $\g$ be the geodesic joining $x$ and $y$,
and $\Omega$ the connected component of $\m-\g$ such that
$z\not\in\partial_\infty\Omega$. Then there exist a point
$w\in\partial_\infty\Omega$ and an ideal Scherk surface over the domain bounded by
the ideal quadrilateral with vertices $x,y,z,w$. (Cf. Figure 3.)
\end{pro}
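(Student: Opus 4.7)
The plan is to produce $w$ by an intermediate value argument applied to $a(\Gamma)-b(\Gamma)$, after which Theorem \ref{JS} can be invoked. I parameterize $w$ along the open arc of $\partial_\infty\Omega$ joining $x$ to $y$ (this arc is an interval by the cone topology), and for each such $w$ I form the ideal quadrilateral $\Gamma(w)$ with cyclic vertices $x,z,y,w$. I label its sides alternately $A_1=\overline{xz}$, $B_1=\overline{zy}$, $A_2=\overline{yw}$, $B_2=\overline{wx}$, so that the target is to apply Theorem \ref{JS} to this labeling.

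First I would check that condition~2 of Theorem \ref{JS} is automatic for any such quadrilateral. The only inscribed polygons distinct from $\Gamma(w)$ are the four triangles obtained by dropping one vertex. Each such triangle uses two consecutive sides of $\Gamma(w)$ (one of each label) and one diagonal, so at some vertex of the triangle the adjacent $A$-side of $\Gamma(w)$ is missing from the triangle, and at some other vertex the adjacent $B$-side is missing. By Remark \ref{c2}, shrinking the horocycles at the relevant vertices yields $2\,a(\P)<|\P|$ and $2\,b(\P)<|\P|$ for each of the four triangles; since only finitely many inequalities appear and the monotonicity in Remark \ref{c2} is preserved under further shrinking, one choice of sufficiently small horocycles satisfies condition~2 for all four triangles simultaneously.

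It remains to arrange condition~1. Set $\phi(w)=a(\Gamma(w))-b(\Gamma(w))$; this is independent of the horocycle choice, so it is an intrinsic real-valued function of $w$. Fix horocycles $H_x,H_y,H_z$ once and for all, and for each $w$ let $H_w$ be the horocycle at $w$ through a chosen reference point $p\in\m$. The truncated lengths $|A_i|,|B_i|$ may then be expressed via Busemann functions evaluated at intersections of horocycles with geodesics. Continuity of the Busemann function in its vertex (cone topology) and continuity of the function ${\cal D}$ from Section~2 imply that $\phi$ is a continuous function of $w$ on the open arc.

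The crux is the endpoint analysis of $\phi$. Since $|A_1|$ and $|B_1|$ are independent of $w$, the asymptotics of $\phi$ are those of $|A_2|-|B_2|$. As $w\to x$, the geodesic $\overline{yw}$ converges to $\gamma$ and the intersection $H_w\cap\overline{yw}$ converges to $H_x\cap\gamma$, so $|A_2|$ remains bounded. On the other hand $\overline{xw}$ degenerates: both endpoints at infinity approach $x$, the geodesic exits every compact set of $\m$, and the horoballs bounded by $H_x$ and $H_w$ (both containing $p$) overlap ever more deeply. A comparison with the hyperbolic plane of curvature $a$, using $K_\m\leq a<0$ in the spirit of the proof of Proposition \ref{megascherk}, shows that the horocycle intersections along $\overline{xw}$ appear in reversed order relative to the disjoint configuration, and that the signed truncated length $|B_2|$ tends to $-\infty$. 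Hence $\phi(w)\to +\infty$ as $w\to x$; by the symmetric argument, $\phi(w)\to -\infty$ as $w\to y$. The intermediate value theorem then produces $w^*$ in the open arc with $\phi(w^*)=0$, and Theorem \ref{JS} applied to $\Gamma(w^*)$ furnishes the desired ideal Scherk surface. The principal obstacle is precisely this boundary analysis: establishing the signed divergence $|B_2|\to -\infty$ as $w\to x$ in a general Hadamard surface (rather than in an explicit hyperbolic model) is the place where the curvature upper bound $K_\m\leq a<0$ must be genuinely used, via Rauch-type comparison of the ambient Busemann geometry with that of the space form of curvature $a$.
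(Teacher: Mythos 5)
Your approach is essentially the one the paper uses: fix condition~1 by an intermediate value argument in the fourth vertex $w$, then check condition~2 and invoke Theorem~\ref{JS}. But you overlook that the intermediate value argument is \emph{already} Lemma~\ref{w}. Writing the quadrilateral vertices in cyclic order $x,z,y,w$, the quantity $a(\Gamma(w))-b(\Gamma(w))$ equals
$$
\bigl(d(H_x,H_z)-d(H_z,H_y)\bigr)+\bigl(d(H_y,H_w)-d(H_w,H_x)\bigr),
$$
and the second bracket is exactly the function $L(w)$ of Lemma~\ref{w}, which is a homeomorphism of the arc onto $\r$; so $\phi$ is a constant plus $L$ and vanishes for a unique $w$. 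The paper makes this even cleaner by first choosing $H_x,H_y$ equidistant from a fixed $H_z$, so the constant is zero and the sought $w$ is $L^{-1}(0)$. Your attempt to re-prove the endpoint divergence via a Rauch-type comparison, flagged as ``the principal obstacle,'' is the place where your sketch is genuinely incomplete; it is also entirely avoidable, since Lemma~\ref{w}'s proof already establishes this by a direct horocycle argument (no curvature comparison beyond what went into Lemma~\ref{w} is needed). For condition~2 you rely on Remark~\ref{c2}, which does work (each of the four inscribed triangles omits an $A$-side at one vertex and a $B$-side at another); the paper instead gets it from the Generalized Triangle Inequality, Lemma~\ref{dt}, which is arguably more direct once one has $|A_1|=|B_1|$ and $|A_2|=|B_2|$ from the normalization, since the required strict inequalities reduce to the positivity of the diagonal lengths. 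So: same route, correct classification of inscribed polygons, but a needlessly reconstructed and under-justified endpoint analysis that you should replace by a citation of Lemma~\ref{w}.
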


%\begin{center}
%\DeclareGraphicsExtensions{jpg}
%\includegraphics[height=5cm]{dibujo13.jpg}\\
%Figure 3.
%\end{center}
\begin{figure}[h]
\mbox{}
\begin{center}
\includegraphics[height=5cm]{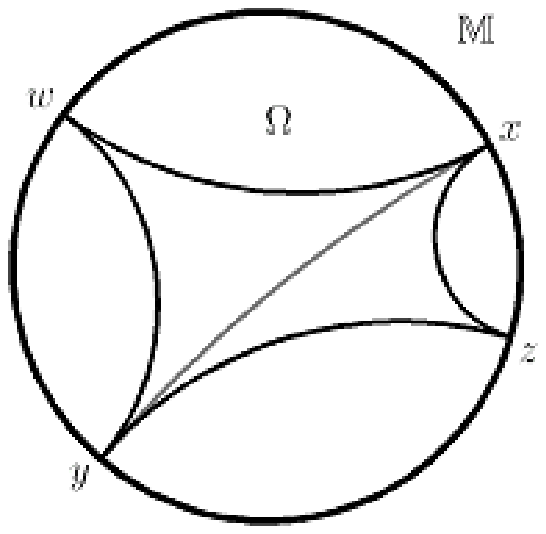}\\
Figure 3.
\end{center}
\end{figure}

We start by establishing some previous Lemmas in order to prove the above
Proposition.

\begin{lem}\label{interseccion}
Let $H_1,H_2$ be two different horocycles in $\m$. Then they intersect at most at
two points.
\end{lem}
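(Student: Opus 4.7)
The plan is to argue by cases according to whether $H_1$ and $H_2$ share a point at infinity, and to reduce the main case to the behaviour of one Busemann function restricted to the other horocycle.

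If both horocycles correspond to the same point $x\in\mi$, property 3 in Section 2 gives $B_{v_1}-B_{v_2}\equiv$ const whenever $v_1(\infty)=v_2(\infty)=x$; so $H_1$ and $H_2$ are level sets of a single smooth function, hence either equal or disjoint. Being distinct, they are disjoint.

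Suppose instead $H_i=\{B_{v_i}=c_i\}$ with $x_i:=v_i(\infty)$ and $x_1\neq x_2$, and let $\g$ be the unique geodesic joining $x_1$ and $x_2$ (which exists under our curvature hypothesis). Parametrize $H_1$ by arc length $\alpha:\r\fl H_1$ and set $f(s)=B_{v_2}(\alpha(s))$, so that $H_1\cap H_2=f^{-1}(c_2)$. At a critical point $s_0$ of $f$, $\alpha'(s_0)$ is orthogonal to $\nabla B_{v_1}$ (tangency to a level set) and to $\nabla B_{v_2}$ (the critical condition), so the two unit gradients agree up to sign at $\alpha(s_0)$. Property 2 rules out $\nabla B_{v_1}=\nabla B_{v_2}$ (this would force $x_1=x_2$), so $\nabla B_{v_1}(\alpha(s_0))=-\nabla B_{v_2}(\alpha(s_0))$; using property 2 once more, the complete geodesic through $\alpha(s_0)$ tangent to this common direction has endpoints $x_1,x_2$ at infinity and therefore coincides with $\g$, so $\alpha(s_0)\in\g$. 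It then suffices to show $H_1\cap\g$ is a single point: with $\g$ oriented so that $\g(-\infty)=x_1$, we have $\g(t)\in D_1$ for $t$ sufficiently negative (where $D_1$ is the convex horoball bounded by $H_1$), and since $\partial_\infty D_1=\{x_1\}\not\ni x_2$, the geodesic must exit $D_1$; by convexity, $\g\cap D_1$ is a ray and $\g\cap H_1$ a single point. Hence $f$ has at most one critical point, and Rolle's theorem yields $|H_1\cap H_2|\leq 2$.

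The step requiring most care is the identification of $\alpha(s_0)$ as a point of $\g$: it rests on property 2 combined with the uniqueness of bi-asymptotic geodesics guaranteed by $K_\m\leq a<0$, both recalled in the preliminaries. Everything else (convexity of horoballs, Rolle's theorem) is routine.
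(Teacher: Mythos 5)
Your argument is correct and is closely allied to the paper's, but the execution is different enough to be worth comparing. The paper argues geometrically: it first disposes of tangential intersections (which must lie on $\gamma_{xy}$ and then are isolated because the two convex horodisks sit on opposite sides of the common tangent geodesic), and then, given two transversal intersection points $p_1,p_2$, takes the point $p_0$ of the compact arc of $H_1$ between them that is farthest from $H_2$; enlarging $H_2$ to pass through $p_0$ gives a tangency, which forces $p_0=\gamma_{xy}\cap H_1$, so this single point separates any two intersection points. You instead restrict the Busemann function $B_{v_2}$ to an arclength parametrization $\alpha$ of $H_1$ and analyze $f=B_{v_2}\circ\alpha$; the dimension-two count (two unit vectors at $\alpha(s_0)$ orthogonal to $\alpha'(s_0)$) shows that at a critical point the gradients of the two Busemann functions are antiparallel, which by property 2 of Section 2 and the uniqueness of bi-asymptotic geodesics places $\alpha(s_0)$ on $\gamma=\gamma_{xy}$; convexity of the horoball then gives $H_1\cap\gamma$ a single point, so $f$ has at most one critical point and Rolle gives at most two zeros. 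Both proofs identify $\gamma_{xy}\cap H_1$ as the unique pivotal point; the paper presents it as a farthest-point/extremal argument (which, read literally, needs a signed distance to handle the case where the arc dips inside the horodisk), while you make the signed distance explicit by using $B_{v_2}$ itself, and Rolle replaces the maximum-on-a-compact-arc step. Your version is a clean, slightly tighter formalization of the same underlying idea, and it handles the equal-center case and the single intersection with $\gamma$ cleanly (the latter via convexity of $B_{v_1}\circ\gamma$ and the fact that $\gamma(-\infty)=x_1$ forces $B_{v_1}\circ\gamma\to-\infty$ at $-\infty$).
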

\begin{proof}
Let us assume that $H_1, H_2$ are horocycles at $x,y\in\mi$, respectively. If $x=y$
then they do not intersect. So, we can suppose $x\neq y$.

Let $p\in H_1\cap H_2$. Then, the intersection between the two horocycles at $p$ is
transversal, unless $p$ is in the geodesic $\g_{xy}$ joining $x$ and $y$. In the
latter case, if $\gamma_p$ is the geodesic through $p$ tangent to $H_1$ and $H_2$,
then each convex horodisk $B_1$, $B_2$, with respective boundaries $H_1$, $H_2$,
must be on different sides of $\gamma_p$. Therefore,
 each horocycle is in the concave part of the other and the
intersection is only $p$.

Thus, if $H_1\cap H_2$ has more than one point then each intersection is
transversal. Let us consider $p_1,p_2\in H_1\cap H_2$ and ${\cal I}$ the compact arc
in $H_1$ joining $p_1$ and $p_2$. Let $p_0$ be a point in ${\cal I}$ at the largest
distance from $H_2$. Then, the horocycle $\widetilde{H_2}$ at $y$ passing trough
$p_0$ intersects $H_1$ in a tangent way. In particular, $p_0$ is the point
$\g_{xy}\cap H_1$.

So, for any two points $p_1,p_2\in H_1\cap H_2$, we have that $p_0$ is in the
interior of the compact arc of $H_1$ joining $p_1$ and $p_2$. Therefore, there are
at most two points in the intersection between $H_1$ and $H_2$.
\end{proof}
\begin{lem}\label{w}
Let $x,y\in\mi$, and $H_x,H_y$ two disjoint horocycles at $x, y$, respectively.
Consider the open set ${\cal I}$ of points in $\mi$ between $x$ and $y$, where we
assume $\mi$ ordered counter-clockwise. For any point $z$, we define $L:{\cal
I}\fl\r$ as
$$
L(z)=d(H_y,H_z)-d(H_z,H_x),
$$
where $H_z$ is any horocycle at $z$ disjoint from the previous ones and $d(,)$
denotes distance in $\m$. Then, $L$ is a homeomorphism from ${\cal I}$ onto $\r$.
\end{lem}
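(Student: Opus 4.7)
The plan is to verify four properties of $L$: well-definedness, continuity, opposite $\pm\infty$ limits at the two endpoints of ${\cal I}$, and strict monotonicity. Together these force $L$ to be a homeomorphism of ${\cal I}$ onto $\r$.

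For well-definedness I would observe that any two horocycles $H_z, H_z'$ at $z$ are distinct level sets of a single Busemann function $\beta_z$. By property 2 of horospheres the distances $d(H_x,H_z)$ and $d(H_y,H_z)$ are realized along $\g_{xz}$ and $\g_{yz}$, respectively; on each of these geodesics $\beta_z$ decreases with unit slope in the direction of $z$, so replacing $H_z$ by $H_z'$ shifts both distances by the same amount and leaves $L(z)$ invariant. Setting $p_x^z:=H_x\cap\g_{xz}$ and $p_y^z:=H_y\cap\g_{yz}$ one even obtains the invariant formula $L(z)=\beta_z(p_y^z)-\beta_z(p_x^z)={\cal D}(p_x^z,z,p_y^z)$. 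Both $p_x^z$ and $p_y^z$ depend continuously on $z$ in the cone topology (the map sending two distinct points of $\mi$ to the geodesic joining them is continuous under $K_\m\le a<0$), so the continuity of ${\cal D}$ recalled in the preliminaries yields the continuity of $L$.

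For the boundary limits I would pick, for $z\in{\cal I}$ close to $x$, the unique horocycle $H_z$ at $z$ externally tangent to $H_x$; by the common-perpendicular argument behind Lemma \ref{interseccion}, the tangency point lies on $\g_{xz}$. As $z\to x$ the geodesic $\g_{xz}$ recedes to infinity, so $H_z$ sinks deep into the horoball at $z$; a direct estimate, transparent in the $\h$ model and extending to $\m$ by the curvature bound $K_\m\le a<0$, gives $d(H_y,H_z)\to+\infty$. Since $d(H_x,H_z)=0$ in this configuration, $L(z)\to+\infty$ as $z\to x$, and the symmetric construction gives $L(z)\to-\infty$ as $z\to y$.

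The main obstacle I expect is strict monotonicity. Given $z_1\ne z_2$ in ${\cal I}$, say $z_1$ preceding $z_2$ counter-clockwise, I would take horocycles $H_{z_1},H_{z_2}$, disjoint from both $H_x$ and $H_y$, externally tangent at a common point $p$; the proof of Lemma \ref{interseccion} places $p$ on $\g_{z_1 z_2}$. Using the convexity of $\beta_x$ and $\beta_y$ together with the fact that the asymptotic geodesic pairs $\g_{xz_1},\g_{xz_2}$ (respectively $\g_{yz_1},\g_{yz_2}$) share only their common endpoint at infinity, one should be able to conclude $[d(H_y,H_{z_1})-d(H_y,H_{z_2})]+[d(H_x,H_{z_2})-d(H_x,H_{z_1})]>0$, which is precisely $L(z_1)>L(z_2)$. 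The bookkeeping is the delicate point: one must track how $p$ sits relative to each of the four geodesics $\g_{xz_i},\g_{yz_i}$, and it cannot be done by naive set-distance triangle inequalities, because the individual differences $d(H_y,H_{z_1})-d(H_y,H_{z_2})$ and $d(H_x,H_{z_2})-d(H_x,H_{z_1})$ can vanish (as already happens in symmetric hyperbolic configurations); only their sum is forced to be strictly positive. Once strict monotonicity is in hand, combining it with continuity and the $\pm\infty$ boundary limits produces the desired homeomorphism of ${\cal I}$ onto $\r$.
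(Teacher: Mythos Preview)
Your treatment of well-definedness and continuity is essentially the paper's: you write $L(z)={\cal D}(p_x^z,z,p_y^z)$ with $p_x^z=H_x\cap\g_{xz}$ and $p_y^z=H_y\cap\g_{yz}$, which is exactly the formula $L(z)={\cal D}(h_1(z),z,h_2(z))$ the authors use.

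The genuine gap is strict monotonicity. You set up a configuration (two horocycles $H_{z_1},H_{z_2}$ tangent at a point $p\in\g_{z_1z_2}$) and then write that ``using the convexity of $\beta_x$ and $\beta_y$ \ldots\ one should be able to conclude'' the desired inequality, while immediately conceding that the bookkeeping is delicate and that naive triangle inequalities fail. That is not a proof; it is a hope. Nothing in your sketch isolates a quantity that is forced to be strictly positive, and convexity of Busemann functions by itself does not obviously separate the two sums you need to compare. The paper avoids this entirely: given $x<p<q<y$, it chooses for each of $p,q$ the \emph{smallest} horocycle meeting $H_x\cup H_y$, and then runs a short case analysis (which side gets touched first) that reduces the comparison to a single distance inequality. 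The key tool is Lemma~\ref{interseccion} (two horocycles meet in at most two points), used to rule out the bad incidence patterns; once those are excluded, one enlarges $H_y$ until it first touches $H_p\cup H_q$ and reads off $d(H_q,H_y)<d(H_p,H_y)$ directly. No Busemann convexity estimate is needed.

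Your boundary-limit argument is also softer than the paper's. You assert that a ``direct estimate, transparent in the $\h$ model and extending to $\m$ by the curvature bound'' gives $d(H_y,H_z)\to+\infty$, but you do not say which comparison theorem you are invoking or why the tangent-to-$H_x$ normalization is compatible with it. The paper gives an intrinsic argument: for $q\in H_y\cap\Omega$ it takes the geodesic from $x$ through $q$, calls its second ideal endpoint $p$, shows (again via Lemma~\ref{interseccion}) that the smallest horocycle at $p$ meeting $H_x\cup H_y$ must touch $H_y$ first, and deduces $L(p)\le -d(q,H_x)$; letting $q\to y$ along $H_y$ sends the right side to $-\infty$. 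This uses only the incidence lemma and no model comparison.
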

\begin{proof}
We observe that the definition of the function $L$ does not depend on the chosen
horocycle $H_z$ at $z$, and also makes sense when $H_z$ intersects $H_x$ or $H_y$ in
one point.

Let us consider the homeomorphism $h_1:\mi-\{x\}\fl H_x$ sending each point
$z\in\mi$ different from $x$ to the intersection between $H_x$ and the geodesic
joining $x$ and $z$. Analogously, we consider the homeomorphism $h_2:\mi-\{y\}\fl
H_y$.

Then $L(z)=d(h_2(z),H_z)-d(H_z,h_1(z))={\cal D}(h_1(z),z,h_2(z))$ is a continuous
function, where ${\cal D}$ is given by (\ref{D}).

Now, we see that $L$ is injective. Let $p,q$ be two points in ${\cal I}$ oriented
such that $x<p<q<y$. Let $H_p,H_q$ be the smallest horocycles at $p,q$,
respectively, such that they intersect $H_x\cup H_y$. We distinguish tree cases:
\begin{enumerate}
\item $H_q$ only intersects $H_y$ and $H_p$ only intersects $H_x$,
\item $H_q$ intersects $H_x$,
\item $H_p$ intersects $H_y$.
\end{enumerate}

The case 1 is trivial because $L(q)<0<L(p)$. And, since the cases 2 and 3 are
symmetric, we only need to study case 2.

First, we observe that $H_p$ does not intersects $H_y$. Otherwise, the intersection
holds at $h_2(p)$ and each connected component of $H_p-\{h_2(p)\}$ intersects twice
to $H_q$, which contradicts Lemma \ref{interseccion} (cf. Figure 4).

%\begin{center}
%\DeclareGraphicsExtensions{jpg}
%\includegraphics[height=5cm]{dibujo24.jpg}\\
%Figure 4.
%\end{center}
\begin{figure}[h]
\mbox{}
\begin{center}
\includegraphics[height=5cm]{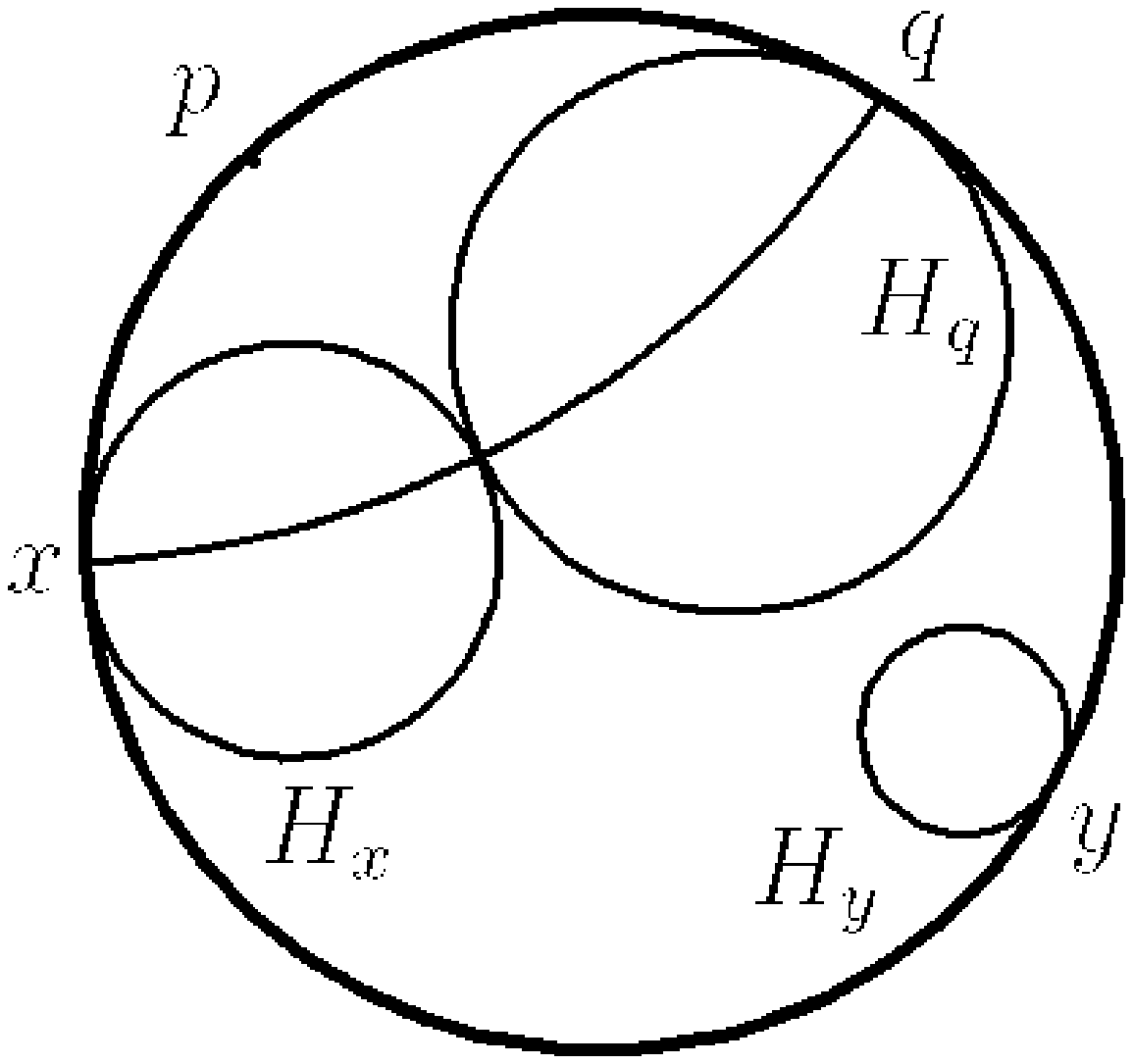}\\
Figure 4.
\end{center}
\end{figure}

Thus, $H_p$ and $H_q$ intersect $H_x$ and we only need to show that
$d(H_q,H_y)<d(H_p,H_y)$ in order to obtain that $L(q)<L(p)$. For that, enlarge $H_y$
to the first horocycle $\widetilde{H_y}$ that intersects $H_p\cup H_q$. From the
previous discussion, replacing $H_y$ and $\widetilde{H_y}$, $H_p$ does not
intersects $\widetilde{H_y}$ (cf. Figure 5). Hence, $d(H_q,H_y)<d(H_p,H_y)$ and the
case 2 is proved.

%\begin{center}
%\DeclareGraphicsExtensions{jpg}
%\includegraphics[height=5cm]{dibujo25.jpg}\\
%Figure 5.
%\end{center}
\begin{figure}[h]
\mbox{}
\begin{center}
\includegraphics[height=5cm]{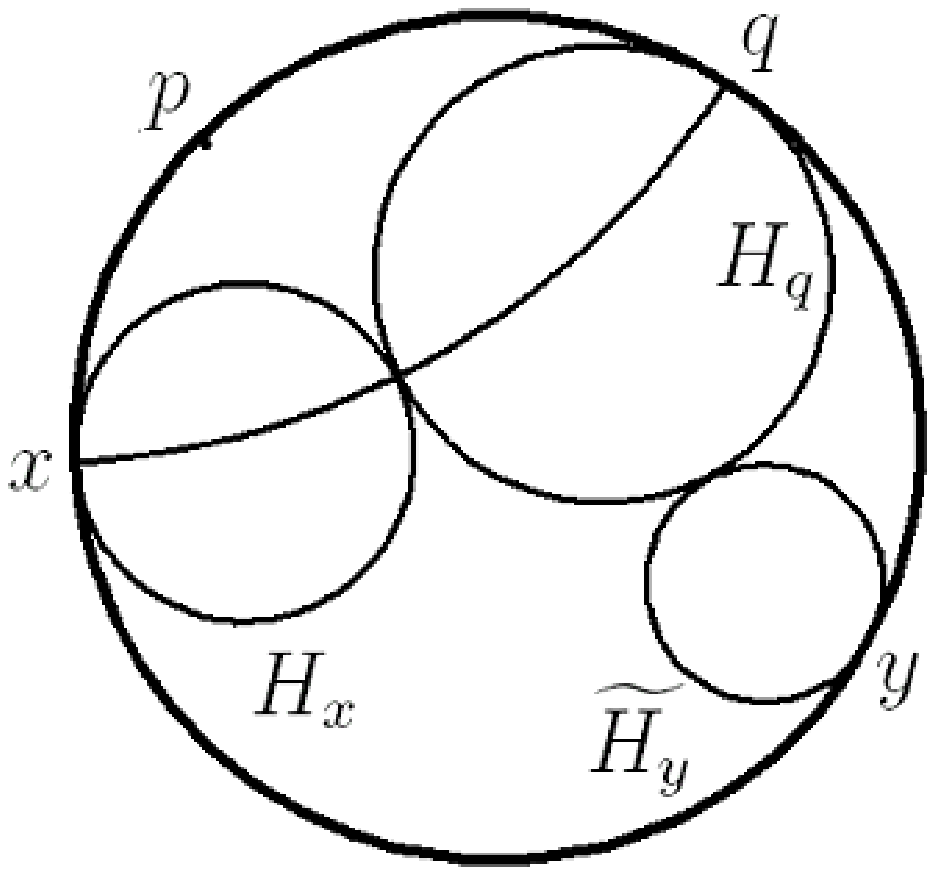}\\
Figure 5.
\end{center}
\end{figure}

Now, let us see that there exists a sequence of points $p_n\in{\cal I}$ such that
$L(p_n)$ goes to $-\infty$.

Let $\g$ be the geodesic joining $x$ and $y$ and $\Omega$ the connected component of
$\m-\g$ which contains ${\cal I}$ in its ideal boundary. Consider $q\in H_y\cap
\Omega$ and $\g_q$ the unique geodesic joining $q$ and $x$. Let us denote by $p$ the
other point of $\g_q$ at the ideal boundary.

The smallest horocycle $H_p$ intersecting $H_x\cup H_y$ does not intersects $H_x$.
Otherwise, $\g_q$ would be contained in the horodisks bounded by $H_x$ and $H_q$.
But, since $q\in\g_q\cap H_y$ then $q$ would be in the horodisk bounded by $H_p$,
which is a contradiction (cf. Figure 6).

%\begin{center}
%\DeclareGraphicsExtensions{jpg}
%\includegraphics[height=5cm]{dibujo26.jpg}\\
%Figure 6.
%\end{center}
\begin{figure}[h]
\mbox{}
\begin{center}
\includegraphics[height=5cm]{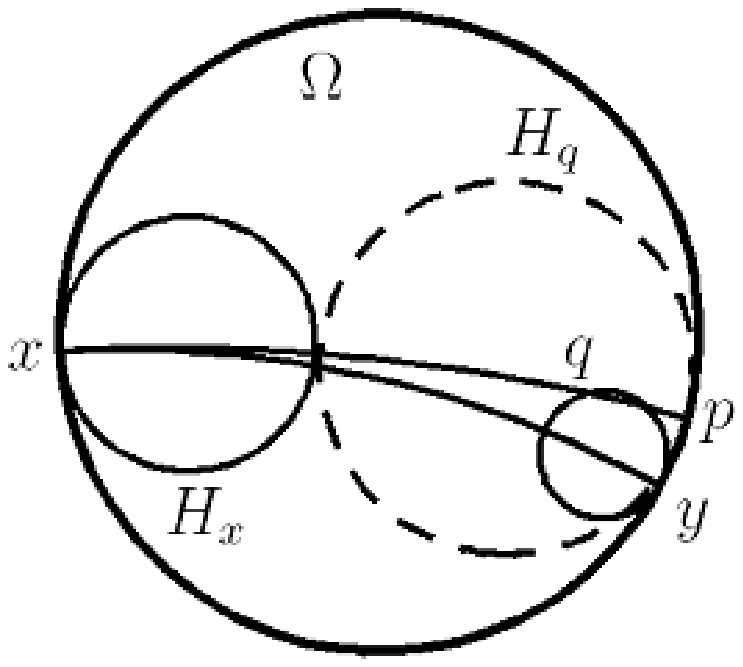}\\
Figure 6.
\end{center}
\end{figure}

Thus $L(p)=-d(H_p,H_x)=-(d(H_p,q)+d(q,H_x))\leq-d(q,H_x)$. Therefore, taking a
sequence $q_n\in H_y\cap \Omega$ converging to $y$, we obtain the corresponding
sequence $p_n$ such that $\lim L(p_n)\leq-\lim d(q_n,H_x)=-\infty.$

Analogously, it can be proved that there exists a sequence of points $p_n\in{\cal
I}$ such that $L(p_n)$ goes to $+\infty$.

Hence, since ${\cal I}$ has the topology of an interval, $L$ is a strictly
monotonous continuous function, and there exist sequences in ${\cal I}$ whose image
tend to $-\infty$ and $+\infty$, then $L$ is a homeomorphism from ${\cal I }$ onto
$\r$.
\end{proof}

Now, let us denote by $|xy|$ the distance between two points in $\m^\ast=\m\cup\mi$,
where we indicate distance between horocycles if $x$ or $y$ are in $\mi$.

\begin{lem}({\bf Generalized Triangle Inequality.})\label{dt}
Consider a triangle with vertices $x_1,x_2$ in $\m^\ast=\m\cup\mi$ and another point
$x_3\in\m$. Then, $$|x_1x_2|\leq|x_1x_3|+|x_3x_2|.$$

Moreover, if $x_1,x_2,x_3\in\mi$ then there exist horocycles at these points such
that the following three inequalities are simultaneously satisfied
$$|x_ix_j|<|x_ix_k|+|x_kx_j|,\qquad \{i,j,k\}=\{1,2,3\}.$$
\end{lem}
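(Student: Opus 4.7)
For the first inequality, I plan to reduce directly to the standard Riemannian triangle inequality by unpacking the notation $|xy|$: it is an infimum of Riemannian distances over points of the relevant horocycle when the endpoint lies in $\mi$, and simply the distance to the point itself otherwise. Whatever the combination of cases, for any representatives $p_1, p_2$ on the relevant horocycles (or equal to $x_1, x_2$ when those points already lie in $\m$), the usual Riemannian triangle inequality gives $d(p_1, x_3) + d(x_3, p_2) \ge d(p_1, p_2)$. Taking the infimum over $p_1$ and $p_2$ yields $|x_1 x_3| + |x_3 x_2| \ge |x_1 x_2|$, as required.

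For the second part, the key tool will be the effect of uniformly \emph{shrinking} the three horocycles. I would start by choosing pairwise disjoint horocycles $H_1, H_2, H_3$ at $x_1, x_2, x_3$ and writing $a = d(H_1, H_2)$, $b = d(H_1, H_3)$, $c = d(H_2, H_3)$. For $s > 0$, let $H_i^s$ denote the concentric horocycle at $x_i$ obtained by shifting $H_i$ a distance $s$ in the direction of $x_i$ (equivalently, the level set of the Busemann function at $x_i$ that is $s$ closer to $x_i$). Property 2 of distance from horospheres identifies $d(H_i, H_j)$ with the Riemannian distance between $H_i \cap \g_{ij}$ and $H_j \cap \g_{ij}$ along the geodesic $\g_{ij}$ joining $x_i$ and $x_j$; shifting $H_i$ by $s$ toward $x_i$ simply moves $H_i \cap \g_{ij}$ by $s$ along $\g_{ij}$ away from $H_j$, so $d(H_i^s, H_j) = d(H_i, H_j) + s$.

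Applying this simultaneously at all three horocycles with the same parameter $s$, each pairwise distance picks up an extra $s$ from each of its two endpoints, yielding
\[
d(H_1^s, H_2^s) = a + 2s, \qquad d(H_1^s, H_3^s) = b + 2s, \qquad d(H_2^s, H_3^s) = c + 2s.
\]
The three strict triangle inequalities for the shifted horocycles are therefore equivalent to $a < b + c + 2s$, $b < a + c + 2s$, and $c < a + b + 2s$, all of which hold simultaneously once $s > \tfrac{1}{2}\max\{a - b - c,\, b - a - c,\, c - a - b,\, 0\}$. The horocycles $H_1^s, H_2^s, H_3^s$ then satisfy the conclusion of the lemma.

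The main technical point to verify carefully is the additivity formula $d(H_i^s, H_j) = d(H_i, H_j) + s$ under shrinking, but this is essentially a restatement of the fact recorded in the preliminaries that distances between disjoint horocycles at distinct ideal points are realized along the unique common perpendicular geodesic; once that is noted, everything else is a routine one-variable inequality.
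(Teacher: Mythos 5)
Your proof is correct, and it takes a somewhat different route from the paper's. For the first inequality, the paper argues by enlarging the horocycle at $x_1$ until it first meets $x_3$ or $x_2$ (resp.\ $H_{x_2}$): in the first case $|x_1x_3|$ becomes $0$ and $x_3$ lies on the new horocycle, so the distance from $x_2$ to the new horocycle is at most $|x_3x_2|$, and in the second case $|x_1x_2|$ becomes $0$; the invariance of $|x_1x_3|+|x_3x_2|-|x_1x_2|$ under this change of horocycle (recorded in the accompanying remark) then transfers the inequality back to the original horocycle. Your infimum argument is more direct: it bypasses the enlargement/invariance step and extracts the inequality straight from the point-to-point triangle inequality, using only that $|x_1x_2|$, $|x_1x_3|$, $|x_3x_2|$ are infima of distances over the relevant horocycles (which is the content of the ``distance from horospheres'' facts in the preliminaries, valid for disjoint horocycles). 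For the second part, the underlying mechanism is the same as the paper's — shrinking a horocycle at $x_i$ by $s$ increases each $|x_ix_j|$ by exactly $s$ — but the paper applies it sequentially: shrink $H_{x_3}$ to obtain the first strict inequality, then $H_{x_2}$ for the second (observing that the first is unaffected because both sides pick up the same increment), then $H_{x_1}$ for the third. Your simultaneous shrinking by a common parameter $s$ makes all three pairwise distances jump by $2s$ at once, reducing the conclusion to a one-line numerical inequality; this is a cleaner and more symmetric presentation of the same idea, at the cost of choosing a single sufficiently large $s$ rather than three separate shrinks. Both approaches are elementary and equivalent in strength.
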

\begin{no}
When $x_3\in\m$, the quantity $|x_1x_3|+|x_3x_2|-|x_1x_2|$ does not depend on the
chosen disjoint horocycles, if any. However, it is important to bear in mind that
$|x_ix_k|+|x_kx_j|-|x_ix_j|$ depends on the chosen horocycles if
$x_1,x_2,x_3\in\mi$.
\end{no}
\noindent {\it Proof of Lemma \ref{dt}}. First, we consider the case $x_3\in\m$.

If $x_1,x_2\in\m$ then the inequality is clear. Thus, let us assume $x_1\in\mi$.
Then, enlarge the horocycle $H_{x_1}$  to another horocycle $\widetilde{H_{x_1}}$
which intersects $x_3$ or $x_2$ (or $H_{x_2}$), for the first time.

If $x_2$ (or $H_{x_2}$) intersects $\widetilde{H_{x_1}}$ the inequality is clear.
Otherwise, $x_3\in\widetilde{H_{x_1}}$, and so the distance from ${x_2}$ (or
$H_{x_2}$) to $\widetilde{H_{x_1}}$ is less than or equal to the distance to $x_3$
and the inequality also holds.

Finally, we consider $x_1,x_2,x_3\in\mi$ and three pairwise disjoint horocycles
$H_{x_1},H_{x_2},H_{x_3}$. Now, fix $H_{x_1},H_{x_2}$ and consider a small enough
horocycle $ \widetilde{H_{x_3}}$ such that $|x_1x_2|<|x_1x_3|+|x_3x_2|.$

To obtain the second inequality, we consider a smaller horocycle
$\widetilde{H_{x_2}}$ at $x_2$, if necessary, such that
$|x_1x_3|<|x_1x_2|+|x_2x_3|.$ And now we observe that the first inequality remains
unchanged for the horocycles $H_{x_1},\widetilde{H_{x_2}},\widetilde{H_{x_3}}$.

Following the same process, we take a small horocycle $\widetilde{H_{x_1}}$ at $x_1$
such that $|x_2x_3|<|x_2x_1|+|x_1x_3|.$ Since the previous two inequalities do not
change, the Lemma follows. \hfill$\square$\vspace{5mm}

\noindent{\it Proof of Proposition \ref{is}.} Let $H_z$ be a horocycle at $z$. Take
$H_x, H_y$ disjoint horocycles at $x, y$, respectively, at the same distance from
$H_z$. Then, using Lemma \ref{w}, there exists a point $w\in\partial_\infty\Omega$
such that $d(H_w,H_x)=d(H_w,H_y)$ for any horocycle $H_w$ at $w$ disjoint from $H_x$
and $H_y$. That is, $a(\G)-b(\G)=0$ for the ideal quadrilateral $\G$ with vertices
$x,y,z,w$.

Finally, from the Generalized Triangle Inequality, condition 2 in Theorem \ref{JS}
is satisfied and, so, there exists an ideal Scherk surface over the domain bounded
by $\G$. \hfill$\square$\vspace{5mm}

Now,  we establish some notation.

Given an even set of points $a_0,a_1,\ldots,a_{2n-1}$ in $\m(\infty)$, which we will
assume ordered counter-clockwise, we denote by $\P(a_0,a_1,\ldots,a_{2n-1})$ the
ideal polygon in $\m$ whose vertices are these points.

In order to obtain an ideal Scherk surface on the domain bounded by
$\P(a_0,a_1,\ldots,a_{2n-1})$, we will fix $+\infty$ boundary data on the sides
$[a_{2k},a_{2k+1}]$ and $-\infty$ boundary data on the sides $[a_{2k+1},a_{2k+2}]$.
Here, $[x,y]$ denotes the complete geodesic joining the points $x,y\in\mi$, and we
identify $a_{2n}= a_0$.

\begin{pro}\label{p2}
Let $u$ be an ideal Scherk graph on the domain $D$ bounded by an ideal polygon
$\P(a_0,a_1,a_2,\ldots,a_{2n-1})$. Now, we attach to $D$ two Scherk domains bounded
by $\P(a_0,b_1,b_2,a_1)$ and $\P(a_1,b_3,b_4,a_2)$. Then, given a compact set
$K\subseteq D$ and $\varepsilon>0$, there exists an ideal Scherk graph $v$ on the
domain bounded by the ideal polygon
$\P(a_0,b_1,b'_2,a_1,b'_3,b_4,a_2,\ldots,a_{2n-1})$ such that
$$
\|v-u\|_{{\cal C}^2(K)}\leq\varepsilon
$$
where $b'_2, b'_3$ can be chosen in any punctured neighborhood of $b_2,b_3$ in
$\m(\infty)$, respectively. (Cf. Figure 7.)
\end{pro}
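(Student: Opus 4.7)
My strategy is threefold: (i) perturb $b_2,b_3$ so that the enlarged polygon $\Gamma_{\text{new}}$ satisfies the hypotheses of Theorem~\ref{JS}; (ii) apply Theorem~\ref{JS} to extract the Scherk graph $v$; (iii) show by a compactness plus barrier argument that $v$ can be made arbitrarily $C^2$-close to $u$ on $K$ by choosing the perturbation small enough.

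For an ideal polygon $\Sigma$ set $\Phi(\Sigma) = a(\Sigma) - b(\Sigma)$; recall this is a real number independent of the horocycle choices. Write $Q_1' = \P(a_0,b_1,b_2',a_1)$ and $Q_2' = \P(a_1,b_3',b_4,a_2)$. A direct bookkeeping yields $\Phi(\Gamma_{\text{new}}) = \Phi(\Gamma) + \Phi(Q_1') - \Phi(Q_2')$, where the minus sign in front of $\Phi(Q_2')$ reflects the fact that the $+\infty/-\infty$ labels along $Q_2$ get swapped once its sides are viewed as sides of $\Gamma_{\text{new}}$. Since $\Phi(\Gamma) = \Phi(Q_1) = \Phi(Q_2) = 0$ and, by Lemma~\ref{w}, the maps $b_2' \mapsto \Phi(Q_1')$ and $b_3' \mapsto \Phi(Q_2')$ are continuous and attain every value near $0$ within arbitrarily small punctured neighbourhoods of $b_2,b_3$, I pick, for any sufficiently small $\delta>0$, points $b_2',b_3'$ in the prescribed punctured neighbourhoods with $\Phi(Q_1') = \Phi(Q_2') = -\delta$. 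Condition~1 of Theorem~\ref{JS} then holds.

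For condition~2, Remark~\ref{c2} automatically dispatches every inscribed polygon $\P$ in which some vertex fails to be adjacent within $\P$ to its neighbouring $A$-side (resp.\ $B$-side) of $\Gamma_{\text{new}}$. A case analysis shows that the critical polygons that remain (notably $\partial Q_1'$ closed by the interior geodesic $[a_1,a_0]$, and the inscribed polygons obtained from it by replacing $[a_1,a_0]$ with a chain of old sides of $\Gamma$, together with their $Q_2'$-analogues) reduce, via $\Phi(\Gamma) = 0$, to the strict inequalities $\Phi(Q_1') < 0$ and $\Phi(Q_2') < 0$, which hold by our choice; the few remaining "rectangle-type" configurations are handled using the Generalized Triangle Inequality (Lemma~\ref{dt}) and the Scherk character of $Q_1,Q_2$. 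Theorem~\ref{JS} now produces the ideal Scherk graph $v = v_\delta$ on the new domain $D_{\text{new}}(\delta)$, unique up to an additive constant.

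Finally, I normalize $v_\delta$ by $v_\delta(p_0) = u(p_0)$ at some fixed $p_0 \in K$, and appeal to the Scherk-type barriers of Proposition~\ref{megascherk}, placed over each halfspace bounded by a side of $\Gamma_{\text{new}}$, to obtain uniform $L^\infty$ (and therefore $C^{2,\alpha}$) estimates for $v_\delta$ on $K$; this works because $K$ stays at positive distance from $\partial D_{\text{new}}(\delta)$ uniformly in $\delta$. Extracting a $C^2_{\text{loc}}(D)$-convergent subsequence $v_{\delta_n} \to v_\infty$, the main obstacle is to identify $v_\infty$ with $u$. On the sides of $\partial D$ that are not perturbed, $v_{\delta_n}$ inherits the correct $\pm\infty$ data; the delicate cases are the geodesics $[a_0,a_1]$ and $[a_1,a_2]$, which lie in the interior of $D_{\text{new}}(\delta)$ for every $\delta>0$. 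A barrier comparison of $v_{\delta_n}$ restricted to $Q_1'$ (resp.\ $Q_2'$) with the limiting Scherk graph on $Q_1$ (resp.\ $Q_2$) forces, as $\delta_n \to 0$, the values of $v_{\delta_n}$ along the shared geodesic to escape to $+\infty$ on $[a_0,a_1]$ and to $-\infty$ on $[a_1,a_2]$; this gives $v_\infty$ the required singular boundary behaviour. Uniqueness in Theorem~\ref{JS} combined with the normalization at $p_0$ then identifies $v_\infty = u$, so $\|v_\delta - u\|_{C^2(K)} \to 0$ as $\delta \to 0^+$, and any sufficiently small $\delta>0$ produces the required $v$.
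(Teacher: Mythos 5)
Your steps (i) and (ii) broadly follow the paper: choosing $b_2',b_3'$ so that $\Phi(Q_1')=\Phi(Q_2')=-\delta<0$ is exactly the paper's normalization (\ref{t}) (with $\delta=t$), and continuity of finitely many strict inequalities is what lets one pass from $t=0$ to small $t>0$. But your verification of Condition~2 for inscribed polygons that genuinely cross between $D$ and the appended quadrilaterals is too quick. The paper's Lemma~\ref{ofu} reduces such a polygon to a polygon $\P''$ inscribed in $D$ and then uses the flux of the \emph{given} solution $u$ -- that $X=\nabla u/W$ is divergence free, equals $\nu$ on the $+\infty$-sides, and satisfies $F_u(\alpha)+|\alpha|>0$ on interior arcs -- to obtain the strengthened inequality~(\ref{fl}), $|\P''|-2(a(\P'')+|I_1|)>0$, in which $I_1=\P''\cap[a_0,a_1]$ enters with a factor of two. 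The Generalized Triangle Inequality and the Scherk relations on $Q_1,Q_2$ only propagate inequalities across the geodesics $[a_0,a_1]$ and $[a_1,a_2]$; they cannot supply the bulk estimate inside $D$ that (\ref{fl}) encodes, so this part of your argument has a real gap.

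The more serious gap is step (iii). You claim that a barrier comparison of $v_{\delta_n}$ restricted to $Q_1'$ with the limiting Scherk graph on $Q_1$ forces $v_{\delta_n}\to+\infty$ along $[a_0,a_1]$. This does not follow: the Scherk graph on $Q_1$ takes boundary value $-\infty$ on $[a_0,a_1]$, so the maximum principle bounds $v_{\delta_n}$ from \emph{below} by something tending to $-\infty$ there, which gives no information; and there is no upper barrier either, since $v_{\delta_n}$ and $u$ are not ordered on $\partial D$ (one is finite where the other is $+\infty$ on $[a_0,a_1]$, and the opposite on $[a_1,a_2]$). The paper does not try to identify the boundary trace of a compactness limit at all. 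Instead it proves the flux identity $t=\int_{[a_0,a_1]}\langle X-X_t,\nu\rangle=-\int_{[a_1,a_2]}\langle X-X_t,\nu\rangle$, hence $\bigl|\int_\alpha\langle X-X_t,\nu\rangle\bigr|\leq 2t$ for any union of arcs $\alpha\subset\partial D$, and combines this with Schoen curvature estimates for the stable graphs $\Si$ and $\Si_t$ and a divergence-theorem argument along a level curve of $u-u_t$ to show $\|N(p)-N_t(p)\|\leq 3\varepsilon$ whenever $t\leq\rho_1\varepsilon^2/4$. That pointwise estimate on the Gauss map, not a boundary barrier, is what yields $C^2$-closeness on $K$ after normalizing at a base point. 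The compactness-plus-barrier scheme as you describe it does not close without an input of this kind.
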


%\begin{center}
%\DeclareGraphicsExtensions{jpg}
%\includegraphics[height=5cm]{dibujo15.jpg}\\
%Figure 7.
%\end{center}
\begin{figure}[h]
\mbox{}
\begin{center}
\includegraphics[height=5cm]{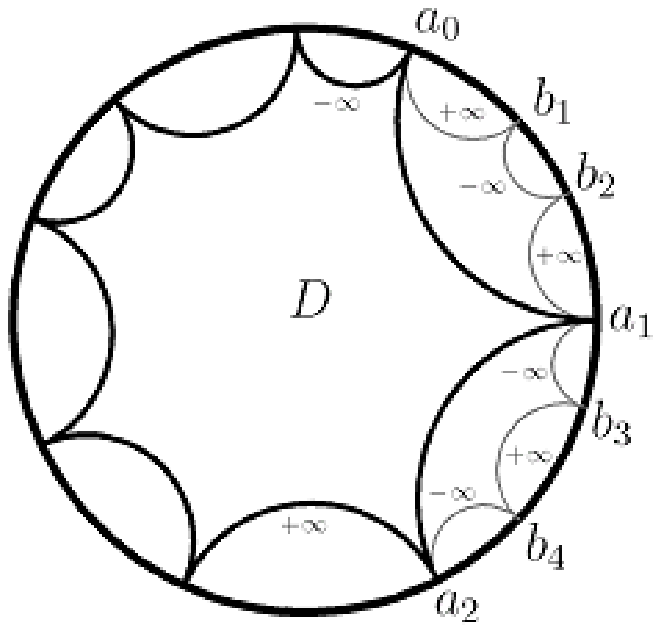}\\
Figure 7.
\end{center}
\end{figure}

\begin{no}
We observe that the existence of the Scherk domains bounded by $\P(a_0,b_1,b_2,a_1)$
and $\P(a_1,b_2,b_3,a_2)$ is guaranteed by Proposition \ref{is}.
\end{no}

The proof of Proposition \ref{p2} proceeds as in \cite{CR}; we will first prove
three lemmas. Following the notation in Proposition \ref{p2}, we denote by $E_1$ the
domain bounded by the polygon $\P(a_0,b_1,b_2,a_1)$, by $E_2$ the domain bounded by
$\P(a_1,b_3,b_4,a_2)$ and by $D_0$ the global domain bounded by
$\G=\P(a_0,b_1,b_2,a_1,b_3,b_4,a_2,\ldots,a_{2n-1})$. Then, it is clear that $\G$
satisfies Condition 1 in Theorem \ref{JS} and, in addition, one obtains
\begin{lem}\label{ofu}
Condition 2 in Theorem \ref{JS} is satisfied by every inscribed polygon in $D_0$,
except the boundaries of $E_1, E_2, D_0-E_1$ and $D_0-E_2$.
\end{lem}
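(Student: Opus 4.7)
The plan is a case analysis based on how an inscribed polygon $\P$ sits with respect to the three Scherk sub-domains $D$, $E_1$, $E_2$ of $D_0$. By hypothesis and Proposition \ref{is}, each of these three domains is itself a Scherk domain, so Theorem \ref{JS} provides condition 2 (strictly) for every inscribed polygon in one of them other than its outer boundary, together with the Scherk equality $2a = 2b = |\cdot|$ on that outer boundary. Write $d_1 = [a_0,a_1]$ and $d_2 = [a_1,a_2]$ for the geodesics separating $E_1, E_2$ from $D$ inside $D_0$: in $D_0$ both are interior, in $D$ they are respectively an $A$- and a $B$-side, whereas in $E_1$ the segment $d_1$ is a $B$-side and in $E_2$ the segment $d_2$ is an $A$-side, the labellings being chosen to match those induced on $\G$.

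If $\P \subset \overline{D}$, then $\P$ is inscribed in $D$: passing from $D$ to $D_0$ only relabels $d_1, d_2$ (when they appear as sides of $\P$) from their $D$-types to ``interior'', so $a_{D_0}(\P) \le a_D(\P)$, $b_{D_0}(\P) \le b_D(\P)$, and $|\P|_{D_0} = |\P|_D$. When $\P$ is different from $\G_D := \P(a_0,\dots,a_{2n-1})$, strict condition 2 in $D$ descends to $D_0$; when $\P = \G_D$, the Scherk equality for $\G_D$ combined with $|d_1|, |d_2| > 0$ still produces strict inequalities in $D_0$. If $\P \subset \overline{E_1}$ and $\P \neq \partial E_1$, then $\P$ is an ideal triangle and condition 2 is exactly two of the three generalized triangle inequalities of Lemma \ref{dt}, which can be made to hold simultaneously for a suitable choice of horocycles. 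The case $\P \subset \overline{E_2}$ is symmetric.

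In the main case $\P$ visits at least one of $E_1, E_2$, that is, has a vertex in $\{b_1,b_2,b_3,b_4\}$. Write $\alpha_i$ for the portion of $\P$ lying in $\overline{E_i}$; since $\P$ is a simple closed curve and the only vertices of $\G$ on the boundary between $E_i$ and $D$ are the endpoints of $d_i$, each non-empty $\alpha_i$ is a single polygonal arc running between those endpoints. Setting $\P_i = \alpha_i \cup d_i$ gives an inscribed polygon in $E_i$, and closing $\P \cap \overline{D}$ by adjoining the $d_j$'s that are needed gives an inscribed polygon $\P_D$ in $D$. Direct bookkeeping with the sign conventions above produces the additive identities
\[
|\P_1| + |\P_2| + |\P_D| \;=\; |\P|_{D_0} + 2|d_1| + 2|d_2|,
\]
\[
a_{E_1}(\P_1) + a_{E_2}(\P_2) + a_D(\P_D) \;=\; a_{D_0}(\P) + |d_1| + |d_2|,
\]
and analogously for $b$ (with the corresponding pieces dropped if some $\alpha_i$ is empty). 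Applying $2a \le |\cdot|$ and $2b \le |\cdot|$ on each sub-polygon and summing yields $2a_{D_0}(\P) \le |\P|_{D_0}$ and $2b_{D_0}(\P) \le |\P|_{D_0}$.

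The main obstacle is upgrading these to strict inequalities, which is precisely where the four exceptions arise. Equality in the summed bound can only occur when every present sub-polygon attains its Scherk equality, i.e., $\P_1 = \partial E_1$, $\P_2 = \partial E_2$ and $\P_D = \G_D$; if all three equalities hold at once, one forces $\P = \G$, which is excluded. A short case check then shows that the configurations in which equality survives in exactly one of the two halves of condition 2 correspond precisely to $\P$ being one of $\partial E_1$, $\partial E_2$, $\partial(D_0 - E_1)$, or $\partial(D_0 - E_2)$; once these four are removed, at least one of the summed sub-polygon inequalities is strict, and condition 2 holds for $\P$. A subtle point is that a side of $\P$ may cross some $d_i$ at a non-vertex point; such transverse crossings are absorbed by the generalized triangle inequality of Lemma \ref{dt}, which shows that ``straightening'' the crossing along $d_i$ only sharpens the relevant inequalities, so the decomposition above can be carried out without loss.
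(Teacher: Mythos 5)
Your overall strategy---decompose $\P$ into pieces lying in $E_1$, $E_2$ and $D$, close each piece with the shared geodesics $d_1=[a_0,a_1]$, $d_2=[a_1,a_2]$, and sum the condition-2 inequalities for the sub-polygons---is genuinely different from what the paper does, and it works cleanly in the cases you treat first ($\P\subset\overline{D}$, $\P\subset\overline{E_i}$, and the ``no-crossing'' case where every side of $\P$ that meets $d_1$ or $d_2$ meets it only at a vertex $a_0,a_1,a_2$). The additivity identities you wrote do check out in those configurations. But the proposal has a real gap in the main case, and it is exactly the point you flag as ``subtle'' at the end and then dismiss in one sentence.

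The difficulty is this: after the reduction of Remark \ref{c2} one may assume every vertex of $\P$ carries its adjacent $A$-side of $\G$, and this still leaves polygons such as $\P(a_0,b_1,b_3,b_4,a_2,\ldots,a_{2n-1})$ whose side $[b_1,b_3]$ crosses both $d_1$ and $d_2$ at interior points $q\in\m$. Cutting $\P$ along $d_1$, $d_2$ then produces pieces with vertices in $\m$, and these are not inscribed polygons of $E_1$, $E_2$, or $D$ in the sense required by Theorem \ref{JS}; so ``condition 2 on each sub-polygon'' is simply not available as an input. Saying that such crossings ``are absorbed by the generalized triangle inequality'' does not fill this hole: the triangle inequality lets you compare a broken path through an interior point $q$ to a geodesic, but it does not give you the inequality $2a\leq|\cdot|$ for a polygon with $\m$-vertices in a Scherk sub-domain. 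That inequality is a theorem about Scherk graphs, not a purely metric fact, and the paper proves it by a flux argument: it integrates the divergence-free field $X=\nabla u/W$ associated to the ideal Scherk graph $u$ on $D$ over the truncated domain bounded by the $\m$-vertex polygon $\P''$, using $X=\nu$ on the $+\infty$ sides and $|\langle X,\nu\rangle|<1$ on interior arcs to obtain strict positivity of $|\P''|-2(a(\P'')+|I_1|)$. Your decomposition argument would need an analogue of exactly this flux inequality on each of $E_1,E_2,D$ for polygons with some vertices in $\m$; without it the argument does not close. (The paper does use a triangle-inequality step, but only in the reduction from $\P$ to $\P'$, together with the Scherk equality allowing the choice $|a_1b_3|=|b_3b_4|=|b_4a_2|=|a_1a_2|$; the core strict inequality comes from the flux.)

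Two smaller issues: the additive identities as stated need case-by-case correction when some $\alpha_i$ is empty or degenerate (e.g.\ when $d_1$ is itself a side of $\P$ the $|d_1|$ terms do not cancel as written), and the ``short case check'' identifying the equality cases with the four exceptional boundaries is asserted rather than carried out; after the Remark \ref{c2} reduction this check is nontrivial because the two halves of condition 2 are being handled asymmetrically. To repair the proof you would either need to prove the flux inequality $2a(\P)\leq|\P|$ for simple closed polygons in a Scherk domain with some vertices in $\m$ (after which your decomposition scheme becomes viable), or follow the paper's route of forming $\P'$ and $\P''$ with $\m$-vertices and applying the flux argument to $u$ on $D$ directly.
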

\begin{proof}
It is clear that the boundaries of $E_1, E_2, D_0-E_1$ and $D_0-E_2$ do not satisfy
Condition 2 in Theorem \ref{JS}. Therefore, we start with a inscribed polygon $\P$
in $D_0$ different from them.

From Remark \ref{c2}, we can assume that $\P$ has the adjacent side of $\partial
D_0$ with $+\infty$ data, at any vertex of $\P$. And we only have to prove that
$2a(\P)<|\P|$, for some choise of horocycles at the vertices.

Let $D_¶$ be the domain bounded by $\P$ and $\P'$ the boundary of $D_¶-E_2$. $\P'$
is a polygon with some possible vertices in $\m$ (cf. Figure 8). We are going to
show that if $2a(\P')<|\P'|$ then $2a(\P)<|\P|$. And, so, we will only need to prove
that the inequality is true for $\P'$.

%\begin{center}
%\DeclareGraphicsExtensions{jpg}
%\includegraphics[height=5cm]{dibujo27b.jpg}\\
%Figure 8.
%\end{center}
\begin{figure}[h]
\mbox{}
\begin{center}
\includegraphics[height=5cm]{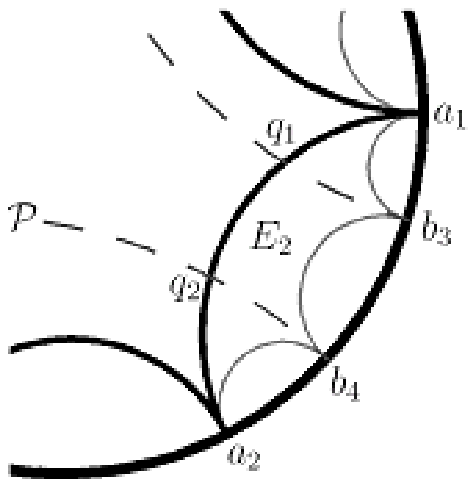}\\
Figure 8.
\end{center}
\end{figure}

If the geodesic $[b_3,b_4]$ does not belong to $\P$ then $\P=\P'$ and the result is
obvious. Let $d_1$ be the vertex of $\P$  previous to $b_3$, and $d_2$ the vertex
following $b_4$. Consider $q_1=[d_1,b_3]\cap[a_1,a_2]$ and
$q_2=[b_4,d_2]\cap[a_1,a_2]$ (cf. Figure 8). Observe that $q_i$ could be $a_i$; in
that case we have $|a_iq_i|=0$. We have,
$$
\begin{array}
{l} a(\P)=a(\P')+|b_3b_4|,\\
|\P|=|\P'|-|q_1q_2|+|q_1b_3|+|b_3b_4|+|b_4q_2|.
\end{array}
$$
On the other hand, since $E_2$ is a Scherk domain we can assume
$d_0=|a_1b_3|=|b_3b_4|=|b_4a_2|=|a_1a_2|$. Thus, from the Generalized Triangle
Inequality, $|a_1b_3|\leq |a_1q_1|+|q_1b_3|$, $|a_2b_4|\leq |a_2q_2|+|q_2b_4|$, and
using that $2a(\P')<|\P'|$, we have
\begin{eqnarray*}
|\P|-2a(\P)&>&|q_1b_3|+|b_4q_2|-|q_1q_2|-|b_3b_4|\\
&=&|q_1b_3|+|b_4q_2|-2|b_3b_4|+|a_1q_1|+|a_2q_2|\\
&=&(|q_1b_3|+|a_1q_1|-|a_1b_3|)+(|b_4q_2|+|a_2q_2|-|a_2b_4|)\geq 0.
\end{eqnarray*}

Therefore, in order to finish the proof of Lemma \ref{ofu} we only need to see that
$2a(\P')<|\P'|$.

Let  $D_{\P'}$ be the domain bounded by $\P'$, and $\P''$ the boundary of
$D_{\P'}-E_1$. We use a flux inequality, for the initial minimal graph $u$, over the
domain bounded by $\P''$ to obtain the desired inequality.

From the minimal graph equation (\ref{minimal}), the field $X=(\nabla u)/W$, with
$W=\sqrt{1+|\nabla u|^2}$ is divergence free. We write $\P''$ as the union of three
sets: the union of all geodesic arcs of $\P''$ with boundary data $+\infty$ and
disjoint from $[a_0,a_1]$, $I_1=\P''\cap[a_0,a_1]$ and $J$ the union of the
remaining arcs.

If $\nu$ is the unit outward normal to $\P''$ then $X=\nu$ on the sides with
$+\infty$ boundary data (see \cite{CR,P}), and so
$$
0=F_u(\partial \P'')=a(\P'')+|I_1|+F_u(J)+\rho,
$$
where, for instance, $F_u(\partial \P'')=\int_{\partial \P''}\me{X}{\nu}$ denotes
the flux of $u$ along $\partial \P''$.

Here, the flux $F_u(J)$ is taken on the compact arcs of $J$ outside the horocycles,
and the number $\rho$ corresponds to the remaining flux of $X$ along some parts of
horocycles.

Since $|\P''|=a(\P'')+|I_1|+|J|$, we have
\begin{equation}
\label{titotitati}
|\P''|-2\ (a(\P'')+|I_1|)=|J|+F_u(J)+\rho.
\end{equation}

We are assuming that $\P$ has the adjacent side of $\partial D_0$ with $+\infty$
data, for any vertex of $\P$. Therefore, the estimation of $|\P'|-2a(\P')$ does not
depend on the chosen horocycles. Moreover, $\P''$ is not empty and it is different
from the boundary of $D$. To see that, observe that if $\P''$ is empty then the
previous condition on $\P$ implies that $\P$ is the boundary of $E_1$, and if $\P''$
is the boundary of $D$ then $\P$ is the boundary of $D\cup E$ or $D_0$.

Hence, $\P''$ has interior arcs in $D$, and $F_u(\alpha)+|\alpha|$ is positive on
each interior arc $\alpha$. Thus, $|J|+F_u(J)$ is positive and non decreasing when
we choose smaller horocycles. In addition, we can select these horocycles so that
$|\rho|$ is as small as desired. Therefore, from (\ref{titotitati}), we can assume
\begin{equation}
\label{fl}
|\P''|-2\ (a(\P'')+|I_1|)>0
\end{equation}
for suitable horocycles at the vertices.

Now, we show that $2a(\P')<|\P'|$. For that, we distinguish three cases
\begin{enumerate}
\item $a_0$ and $a_1$ are vertices of $\P$,
\item only one of the points $a_0,a_1$ is a vertex of $\P$,
\item neither $a_0$ nor $a_1$ are vertices of $\P$.
\end{enumerate}

In the case 1, $[a_0,b_1]$ and $[b_2,a_1]$ must be sides of $\P$. So, $[b_1,b_2]$ is
also a side of $\P$ and $E_1$ is contained in the domain bounded by $\P$. Thus,
$$
a(\P')=a(\P'')+2d_0,\qquad |\P'|=|\P''|+2d_0\quad\mbox{and}\quad |I_1|=d_0.
$$
And (\ref{fl}) gives us $2a(\P')<|\P'|$.

For the case 2, we assume for instance that $a_0$ is a vertex of $\P$, but not
$a_1$. Then $b_2$ is not a vertex of $\P$ and we consider the point $q$ which is the
intersection between the geodesic $[a_0,a_1]$ and the geodesic joining $b_1$ and the
following vertex of $\P$ (cf. Figure 9). Then $I_1=[a_0,q]$ and
$$
a(\P')=a(\P'')+d_0,\qquad |\P'|=|\P''|-|I_1|+d_0+|b_1q|.
$$

%\begin{center}
%\DeclareGraphicsExtensions{jpg}
%\includegraphics[height=5cm]{dibujo28b.jpg}\\
%Figure 9.
%\end{center}
\begin{figure}[h]
\mbox{}
\begin{center}
\includegraphics[height=5cm]{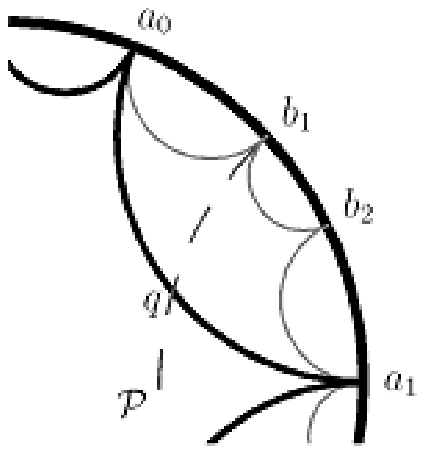}\\
Figure 9.
\end{center}
\end{figure}

And using (\ref{fl})
\begin{eqnarray*}
0&<&(|\P'|+|I_1|-d_0-|b_1q|)-2(a(\P')-d_0+|I_1|)\\
&=&|\P'|-2a(\P')+d_0-|I_1|-|b_1q|.
\end{eqnarray*}
Therefore, using the Generalized Triangle Inequality for the triangle with vertices
$a_0,b_1,q$ we have $d_0-|I_1|-|b_1q|\leq 0$ and the inequality $2a(\P')<|\P'|$ is
proved.

Case 3 is clear because the domain bounded by $\P'$ lies on $D$ and the flux formula
(\ref{fl}) gives the desired inequality.
\end{proof}

Now we will perturb $E_1$ and $E_2$ to obtain a Scherk domain. We will do this so
that $E_1, E_2$ and their complements are inscribed polygons satisfying Condition 2
of Theorem \ref{JS}. By the previous Lemma \ref{ofu}, the other inscribed polygons
in $D_0$ also satisfy Condition 2. Hence, if we make the perturbation of $E_1,E_2$
small enough, the strict inequalities (Condition 2) satisfied by these inscribed
polygons will remain strict inequalities. We now make this precise.

\begin{lem}
For any punctured neighborhoods of $b_2,b_3$ in $\mi$, we can choose, respectively,
two points $b'_2,b'_3$ such that the domain bounded by
$\P(a_0,b_1,b'_2,a_1,b'_3,b_4,a_2,\ldots,a_{2n-1})$ is, in fact, a Scherk domain.
\end{lem}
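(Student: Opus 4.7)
The strategy is to perturb $b_2$ and $b_3$ in directions controlled by Lemma \ref{w}, so that the new polygon $\G=\P(a_0,b_1,b'_2,a_1,b'_3,b_4,a_2,\ldots,a_{2n-1})$ satisfies both conditions of Theorem \ref{JS}. First I would verify that the unperturbed polygon $\G_0$ (where $b'_2=b_2$ and $b'_3=b_3$) already satisfies Condition~1: the Scherk identities $|a_0b_1|+|b_2a_1|=|b_1b_2|+|a_0a_1|$ for $E_1$, $|a_1b_3|+|b_4a_2|=|a_2a_1|+|b_3b_4|$ for $E_2$, together with $a(\partial D)=b(\partial D)$ for $D$, combine telescopically to give $a(\G_0)=b(\G_0)$. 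Moreover, by Lemma \ref{ofu}, the only inscribed polygons in $D_0$ failing Condition~2 are $\partial E_1$, $\partial E_2$, $\partial(D_0-E_1)$ and $\partial(D_0-E_2)$, and at each of these exactly one of the inequalities $2a(\P)<|\P|$, $2b(\P)<|\P|$ saturates to equality while the other remains strict.

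Next I would introduce the first-order deviations
\[
\Delta_1=(|b_1b'_2|-|b'_2a_1|)-(|b_1b_2|-|b_2a_1|),\qquad \Delta_2=(|b'_3b_4|-|a_1b'_3|)-(|b_3b_4|-|a_1b_3|),
\]
and substitute into each of the four degenerate expressions. A direct computation using the Scherk identities above shows that after perturbation the saturated side of Condition~2 equals $\Delta_1$ at both $\partial E_1$ and $\partial(D_0-E_2)$, equals $\Delta_2$ at both $\partial E_2$ and $\partial(D_0-E_1)$, and that $a(\G)-b(\G)=\Delta_2-\Delta_1$. Consequently it suffices to produce a perturbation with $\Delta_1=\Delta_2>0$.

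This is supplied by Lemma \ref{w}: on the open arc of $\mi$ between $b_1$ and $a_1$ containing $b_2$, the function $z\mapsto|a_1z|-|zb_1|$ is a homeomorphism onto $\r$, so for any prescribed punctured neighborhood of $b_2$ and any sufficiently small $\delta>0$ there is $b'_2$ in that neighborhood with $\Delta_1=\delta$; the analogous statement on the arc between $a_1$ and $b_4$ containing $b_3$ yields $b'_3$ with $\Delta_2=\delta$. Since there are only finitely many combinatorial types of inscribed polygons in $D_0$ and their truncated lengths depend continuously on the vertices, shrinking $\delta$ preserves every strict inequality that Lemma \ref{ofu} provided at the remaining inscribed polygons; the resulting $\G$ therefore bounds a Scherk domain. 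The main obstacle is the sign bookkeeping in the second step: one must verify that the \emph{same} sign of $\Delta_1$ simultaneously converts the saturated equality at $\partial E_1$ and at its complement $\partial(D_0-E_2)$ into strict inequalities. This works because these complementary polygons share the $+\infty$ sides $[a_0,b_1]$ and $[b'_2,a_1]$ adjacent to $b'_2$, so the $E_1$ Scherk identity distributes the imbalance symmetrically between them, and symmetrically for $\Delta_2$.
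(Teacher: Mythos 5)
Your proof is correct and is essentially the paper's own argument, just organized slightly differently: the paper uses Lemma \ref{w} to set up a single one-parameter family $b_2(t),b_3(t)$ with $\Delta_1=\Delta_2=t$ from the outset, whereas you first introduce $\Delta_1$ and $\Delta_2$ independently, compute that $a(\G)-b(\G)=\Delta_2-\Delta_1$ and that the four degenerate polygons of Lemma \ref{ofu} are governed by $\Delta_1$ (for $\partial E_1$ and $\partial(D_0-E_2)$) resp.\ $\Delta_2$ (for $\partial E_2$ and $\partial(D_0-E_1)$), and then impose $\Delta_1=\Delta_2=\delta>0$; both finish with the same finiteness-plus-continuity argument for the remaining inscribed polygons. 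One small wording slip: $\partial(D_0-E_2)$ is not the complement of $\partial E_1$ (that would be $\partial(D_0-E_1)$) --- the correct reason that $\partial E_1$ and $\partial(D_0-E_2)$ are both governed by $\Delta_1$ is that both contain the vertex $b'_2$ together with its two adjacent sides $[b_1,b'_2]$ and $[b'_2,a_1]$, while neither contains $b'_3$; your computation of the saturated quantities is nevertheless correct.
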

\begin{proof}
We first observe that
\begin{eqnarray*}
|a_0a_1|-|a_1b_2|+|b_2b_1|-|b_1a_0|&=&0\\
|a_2a_1|-|a_1b_3|+|b_3b_4|-|b_4a_2|&=&0
\end{eqnarray*}
since $E_1$ and $E_2$ are Scherk domains.

Now, using Lemma \ref{w}, there exist unique $b_2(t)$, $b_3(t)$ such that
\begin{eqnarray}
t&=&|a_0a_1|-|a_1b_2(t)|+|b_2(t)b_1|-|b_1a_0|\nonumber\\[-3mm]
&&\label{t}\\[-3mm]
&=&|a_2a_1|-|a_1b_3(t)|+|b_3(t)b_4|-|b_4a_2|,\nonumber
\end{eqnarray}
where $b_2(t)$ varies in the open interval between $b_1$ and $a_1$ at infinity,
$b_3(t)$ between $a_1$ and $b_4$, and $t\in\r$. In addition, the functions $b_i(t)$
are homeomorphisms.

Let $\G(t)=\P(a_0,b_1,b_2(t),a_1,b_3(t),b_4,a_2,\ldots,a_{2n-1})$. From (\ref{t}),
Condition 1 in Theorem \ref{JS} is satisfied for any $t\in\r$. In addition, if $t>0$
then the domains $E_1(t)$ and $E_2(t)$ bounded by $\P(a_0,b_1,b_2(t),a_1)$,
$\P(a_1,b_3(t),b_4,a_2)$ and their complements also satisfy Condition 2. In order to
obtain Condition 2 for the other inscribed polygons we argue as follows.

From Lemma \ref{ofu}, every polygon inscribed in the domain bounded by $\G(0)$
satisfies Condition 2, except $E_1(0)$, $E_2(0)$ and their complements.  Observe
that the inequalities in Condition 2 are strict, and the number of inscribed
polygons is finite. From Lemma \ref{w}, these inequalities depend continuously on
$b_2(t)$ and $b_3(t)$, so one has that there exists $t_0>0$ such that Condition 2 is
also satisfied for any domain bounded by $\G(t)$, with $0<t<t_0$.
\end{proof}
\noindent{\it Proof of Proposition \ref{p2}.} The proof is a verification that the
arguments in \cite{CR} work in our context. Let us denote by $D_t$ the Scherk domain
bounded by $\G(t)$. Consider the graph of a Scherk surface $u_t$ defined on $D_t$
with the corresponding infinite boundary data. First, we show that $\nabla u$ is the
limit of $\nabla {u_t}_{|D}$ when $t$ goes to zero.

Let us consider the divergence free fields $X_t=(\nabla {u_t})/W_t$ and $X=(\nabla
u)/W$, with $W_t=\sqrt{1+|\nabla u_t|^2}$, $W=\sqrt{1+|\nabla u|^2}$, associated to
$u_t$ and $u$, respectively. We now see that $X_t$ converges to $X$ on $D$ when $t$
tends to zero.

Consider the outer pointing normal $\nu$ along the boundary of $D$. We have fixed
the same infinite boundary data on $\partial D-([a_0,a_1]\cup[a_1,a_2])$, so
$X_t=X=\pm\nu$ on this set.

On the boundary of $E_1(t)$ truncated by the horocycles, the flux of $X_t$ is zero.
Hence,
$$
0=|a_0b_1|-|b_1b_2(t)|+|b_2(t)a_1|+\int_{[a'_0,a'_1]}\me{X_t}{-\nu}+F_{u_t}(I_t),
$$
where $[a'_0,a'_1]$ is the compact geodesic arc between the horocycles at $a_0$ and
$a_1$, and $I_t$ is the set of arcs included in the four horodisks. Then, from
(\ref{t}),
$$
t=\int_{[a'_0,a'_1]}(1-\me{X_t}{\nu})+F_{u_t}(I_t),
$$
and taking limits for smaller horocycles at the vertices one has the convergence of
the integral on the whole geodesic and
$$
t=\int_{[a_0,a_1]}(1-\me{X_t}{\nu})=\int_{[a_0,a_1]}\me{X-X_t}{\nu},
$$
since $X=\nu$ on $[a_0,a_1]$.

Analogously, one has
$$
t=-\int_{[a_1,a_2]}\me{X-X_t}{\nu}.
$$

Thus, for any family $\alpha$ of disjoint arcs of $\partial D$
\begin{equation}\label{cuatro}
\left|\int_\alpha\me{X-X_t}{\nu}\right|\leq\int_{[a_0,a_1]\cup[a_1,a_2]}\left|\me{X-X_t}{\nu}\right|=2t.
\end{equation}

%For the study of the field $X-X_t$ on the interior of $D$, we consider the flux of
%$X-X_t$ along a level curve through an interior point $p$. This level curve goes to
%the boundary of $D$. Thus, we can consider a closed cycle by attaching short curves
%and a curve on the boundary of $D$ to the given level curve. Then the flux is zero
%along the closed cycle, and small along the curve we attach to the level curve.
%Hence, the flux is small along the level curve, which implies that the tangent
%planes are close. Then, bounded curvature of the graphs gives the solutions are
%close, we now make this precise.

Now, we study the behavior of the field $X-X_t$ on the interior of $D$. Let $\Si$ be
the graph of $u$ and $\Si_t$ the graph of $u_t$. These graphs are stable, complete
and satisfy uniform curvature estimates by Schoens' curvature estimates. Thus,
$$
\forall \varepsilon>0\ \ \exists\rho>0\mbox{ such that }\forall p\in D\ \ \forall
q\in\Si_t\cap B((p,u_t(p)),\rho)\mbox{ one has }\|N_t(p)-N_t(q)\|\leq\varepsilon.
$$
Here, $\rho$ does not depend on $t$, and $N_t$ denotes the normal to $\Si_t$
pointing down and $B((p, v_t(p)), \rho)$ the ball of radius $\rho$, centered at $(p,
v_t (p))\in\m\times\r$. These estimates remain true for $\Si$.

Therefore, one obtains that fixed $\varepsilon>0$ and $p\in D$ there exists
$\rho_1\leq\rho/2$, which depends continuosly on $p$ but does not depend on $t$,
such that for every $q$ in the disk $B(p,\rho_1)$ in $\m$ with center $p$ and radius
$\rho_1$, we have $|u(q)-u(p)|\leq \rho/2$.

Let us assume now that $\|N_t(p)-N(p)\|\geq 3\,\varepsilon$. Consider the connected
component $\Omega_t(p)$ of $\{q\in D:u(q)-u_t(q)>u(p)-u_t(p)\}$ with $p$ in its
boundary, and $\Lambda_t$ the component of $\partial \Omega_t(p)$ containing $p$.
Since $\Lambda_t$ is a level curve of $u-u_t$ then it is piecewise smooth. Let
$\sigma\subseteq\Si$, $\sigma_t\subseteq\Si_t$ be the two parallel curves which
project on $\Lambda_t\cap B(p,\rho_1)$.

For the points of $\sigma$, we have that if $q\in\Lambda_t\cap B(p,\rho_1)$ then
$|(q,u(q))-(p,u(p))|\leq\rho_1+\rho/2\leq\rho$ and so
$\|N(q)-N(p)\|\leq\varepsilon.$ The same is also true on the parallel curve
$\sigma_t$, that is, $\|N_t(q)-N_t(p)\|\leq\varepsilon$ for all $q\in\Lambda_t\cap
B(p,\rho_1)$.

Thus, using these inequalities and the assumption on the normals at $p$, we obtain
for all $q\in\Lambda_t\cap B(p,\rho_1)$ that
$\|N(q)-N_t(q)\|\geq\|N_t(p)-N_t(q)\|-2\,\varepsilon\geq\varepsilon$.

From Assertion 2.2 in \cite{P},
\begin{equation}\label{Pi}
\me{X-X_t}{\eta}_\m\geq\frac{\|N-N_t\|^2}{4}%\geq\frac{|X_t-X|^2}{4}
\end{equation}
 with
$\eta=\nabla(u-u_t)/|\nabla(u-u_t)|$ orienting the level curve $\Lambda_t$ at its
regular points (see also \cite{CR}). Thus, one has
$$
\int_{\Lambda_t\cap B(p,\rho_1)}\me{X-X_t}{\eta}\geq\frac{\rho_1\varepsilon^2}{2}.
$$

In addition, from (\ref{Pi}), $\me{X-X_t}{\eta}$ is non negative outside the
isolated points where $\nabla (u-u_t)=0$, and so, for every compact arc
$\beta\subseteq\Lambda_t$ containing $\Lambda_t\cap D(p,\rho_1)$ we have
\begin{equation}\label{cinco}
\int_{\beta}\me{X-X_t}{\eta}\geq\frac{\rho_1\varepsilon^2}{2}.
\end{equation}

By the maximum principle, $\Lambda_t$ is not compact in $D$. And, since $\Lambda_t$
is proper on $D$, its two infinite branches go close to $\partial D$. Then there
exists a connected compact part $\beta$ of $\Lambda_t$ , containing $\Lambda_t\cap
B(p,\rho_1)$, and two arcs $\delta$ in $D$ small enough and joining the extremities
of $\beta$ to $\partial D$. Eventually truncating by a family of horocycles, the
flux formula for $X-X_t$ yields
$$
0=\int_\beta
\me{X-X_t}{-\eta}+\int_\alpha\me{X-X_t}{\nu}+F_{u-u_t}(\delta\cup\delta'),
$$
where $\alpha$ is contained in $\partial D$ and $\delta'$ is contained in the
horocycles and correctly oriented. Using (\ref{cuatro}) and (\ref{cinco}) we obtain
$$
\frac{\rho_1\varepsilon^2}{2}\leq 2t+F_{u-u_t}(\delta\cup\delta').
$$

When the length of $\delta\cup\delta'$ goes to zero, one has
$$
\frac{\rho_1\varepsilon^2}{4}\leq t.
$$

Hence, if $t\leq(\rho_1\varepsilon^2)/4$ then
$\|X(p)-X_t(p)\|\leq\|N(p)-N_t(p)\|\leq 3\,\varepsilon$. Since $\rho_1$ only depends
continuously on $p$, this gives us the desired convergence of $X_t$ to $X$ when $t$
goes to zero.

After the normalization $u_t(p_0) = u(p_0)$ for a fixed $p_0\in D$, we have that
$\lim_{t\rightarrow0} {u_t}_{|D} = u$. The convergence is uniform in relatively
compact domains $\widetilde{D}$ of $D$ and ${\cal C}^\infty$ on compact sets of
$\widetilde{D}$. Hence, given a compact set $K\subseteq D$ and $\varepsilon>0$,
there exists a $t$ small enough such that $\|u_t-u\|_{{\cal
C}^2(K)}\leq\varepsilon$. \hfill$\square$

\section{Entire minimal graphs.}

We now establish our main result.

\begin{teo}
Let $\m$ be a Hadamard surface with Gauss curvature bounded from above by a negative
constant. Then, there exist harmonic diffeomorphisms from the complex plane onto
$\m$.
\end{teo}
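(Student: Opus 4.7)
The plan is to realize the harmonic diffeomorphism as the vertical projection of an entire minimal graph $\Sigma \subset \mm$ whose induced conformal structure is that of $\c$, exactly as in the strategy of \cite{CR} for $\h$. The vertical projection $\pi:\Sigma\to\m$ is always a diffeomorphism onto $\m$ (since $\Sigma$ is a graph over $\m$); moreover, it is harmonic with respect to the induced conformal structure on $\Sigma$ and the Riemannian metric on $\m$, because the horizontal projection of a minimal surface in a Riemannian product is harmonic. Thus the whole problem is reduced to producing such a $\Sigma$ together with a verification that its conformal type is parabolic.

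First, I would construct the graph as a limit of ideal Scherk graphs. Choose a basepoint $p_0\in\m$ and fix four points $a_0,a_1,a_2,a_3\in\mi$ which together with Proposition~\ref{is} yield an initial ideal Scherk graph $u_1$ over the quadrilateral $\P(a_0,a_1,a_2,a_3)$. Using Proposition~\ref{p2} repeatedly, attach two new Scherk domains on two consecutive sides to obtain a new ideal Scherk graph $u_2$ on a larger polygonal domain $D_2$, chosen so that $\|u_2-u_1\|_{\mathcal{C}^2(K_1)}\le 2^{-1}$ on a compact set $K_1\subset D_1$ containing $B(p_0,1)$. Iterating this process, at step $n$ attach two Scherk domains on suitably chosen adjacent sides and use Proposition~\ref{p2} with a compact set $K_n\supset B(p_0,n)$ and tolerance $2^{-n}$. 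By choosing the new vertices $b'_{2},b'_{3}$ far enough along $\mi$ at each step (the freedom of picking them in arbitrary punctured neighborhoods allows this), one can guarantee that $\bigcup_n D_n=\m$. The Cauchy sequence $\{u_n\}$ converges in $\mathcal{C}^2_{\mathrm{loc}}$ to an entire solution $u:\m\to\r$ of \eqref{minimal}, whose graph $\Sigma$ is an entire minimal surface in $\mm$.

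The second and crucial step is to show that $\Sigma$ has the conformal type of $\c$. This is the point where the construction must be carried out with care: the sides with $+\infty$ and $-\infty$ data must be arranged in an alternating, symmetric fashion so that, on the graph, the ``$+\infty$'' and ``$-\infty$'' ends correspond to pairs of divergent curves whose extrinsic separation stays bounded. Following \cite{CR}, I would adapt the argument that uses the third coordinate $t$: the height function $t$ on $\Sigma$ is a harmonic function which, on each half of $\Sigma$ (say where $t>0$), is the real part of a holomorphic function whose conjugate is controlled by the flux formula. One then considers the holomorphic map $F=t+i\,t^{*}:\Sigma\to\c$. The alternating $\pm\infty$ behavior together with the uniform flux bounds of Lemma~\ref{ofu} and equation~\eqref{cuatro} force $F$ to be a proper map of finite degree (in fact two) onto $\c$, which precludes $\Sigma$ from being conformally the disk: a disk cannot admit a proper holomorphic map onto $\c$. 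Hence $\Sigma$ is conformally $\c$.

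The main obstacle is exactly this last conformal-type verification: the interior convergence from Proposition~\ref{p2} only gives us an entire minimal graph, but a priori such a graph could be conformally equivalent to the disk, in which case the vertical projection would produce a harmonic diffeomorphism from $\mathbb{D}$, not $\c$, onto $\m$. Handling this requires choosing the combinatorics and the ``sizes'' of the attached Scherk pieces so that the relevant flux integrals $\int(1-\langle X,\nu\rangle)$ along the alternating sides can be made uniformly small, matching the construction in~\cite{CR}. Once this conformal control is in place, the vertical projection $\pi:\Sigma\to\m$ is the desired harmonic diffeomorphism from $\c$ onto $\m$, completing the proof.
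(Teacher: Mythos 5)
Your overall strategy agrees with the paper: realize the harmonic diffeomorphism as the vertical projection of an entire minimal graph $\Sigma\subset\mm$ obtained as a limit of ideal Scherk graphs, and then verify that $\Sigma$ is conformally $\c$. However, two central parts of your argument do not hold up, and both are exactly where the paper's proof does real work.

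\textbf{Exhaustion of $\m$.} You attach two new Scherk quadrilaterals to two adjacent sides at each step and assert that ``choosing $b'_2,b'_3$ far enough along $\mi$'' yields $\bigcup D_n=\m$. This misreads Proposition~\ref{p2}: the freedom there is only to perturb $b_2,b_3$ inside arbitrarily \emph{small} punctured neighborhoods, not to push the new vertices far away. The actual degrees of freedom come from the choice of $z$ in Proposition~\ref{is}. The paper uses this by attaching a Scherk quadrilateral to \emph{every} side of $\partial D_n$, choosing each new vertex $z$ to bisect (in the angle at $p_0$) the arc of $\mi$ between consecutive vertices; this is Condition~3 (angle at most $\pi/2^{n-1}$) in the construction, and it is what forces the vertex set to become dense in $\mi$ and hence $\bigcup D_n=\m$. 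Adding Scherk pieces to only two sides at each stage, without this angular bisection, gives no guarantee that $D_n$ eventually covers any prescribed compact set.

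\textbf{Conformal type.} Your proposal is to form $F=t+it^{*}$ and argue it is a proper holomorphic map of finite degree (you even suggest degree two) onto $\c$. This does not work for the limit graph. As $n\to\infty$ the number of sides of $\partial D_n$ grows without bound, so a generic level set $\{t=c\}$ of the limiting height function has more and more components; any reasonable degree for $F$ on $\Sigma$ is therefore infinite, and the ``proper finite-degree map onto $\c$ implies parabolic'' mechanism is unavailable. The paper instead invokes the Lemma (taken from \cite{CR}) that \emph{each individual} ideal Scherk surface is conformally $\c$, and then propagates this to the limit by a conformal-modulus argument: at each stage it chooses the compact set $K_{n+1}$ so that the annular piece of the graph of $u_{n+1}$ over $K_{n+1}-\mathrm{int}(K_n)$ has modulus at least $1$ (possible precisely because the finite-stage graph is conformally $\c$), checks that this modulus bound persists for $u_m$, $m>n+1$, by the $\mathcal C^2$-closeness from Proposition~\ref{p2}, and finally applies Gr\"otzsch's lemma to conclude that the limiting graph has infinite modulus and is therefore conformally $\c$. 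That modulus bookkeeping (Condition~2 on the sequence $(D_n,K_n,u_n)$) is the substitute for your $F$ argument and is missing from your proposal. Supplying the angular bisection and the modulus/Gr\"otzsch step would bring your outline in line with a correct proof.
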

\begin{proof} The vertical projection from a minimal surface $\Si\subseteq\mm$
into $\m$ is a harmonic map. Therefore, in order to prove the Theorem, we only need
to show that there exist entire minimal graphs in $\mm$ with the conformal structure
of the complex plane.

Let us fix a point $p_0$ in a Scherk domain $D_1\subseteq \m$ and also a compact
disk $K_1\subseteq D_1$. Observe that the existence of $D_1$ is guaranteed by
Proposition \ref{is}.

Consider the homeomorphism $h$ from the set $\s^1_{p_0}$ of unit tangent vectors at
$p_0$ onto $\mi$, which maps a vector $v\in \s^1_{p_0}$ to the point in $\mi$ given
by $\gamma_v(+\infty)$. Here, $\gamma_v(t)$ is the unique geodesic in $\m$ with
initial conditions $\gamma_v(0)=p_0$ and $\gamma'_v(0)=v$. We will measure the angle
between two points $x,y\in\mi$ as the angle between the vectors $h^{-1}(x),
h^{-1}(y)\in\s^1_{p_0}$.

Now, fix a sequence  of positive numbers $\varepsilon_n$ such that $\sum_{n\geq
1}\varepsilon_n<\infty$. We show the existence of an exhaustion of $\m$ by Scherk
domains $D_n$ and by compact disks $K_n\subseteq D_n$ such that each $K_n$ is
contained in the interior of $K_{n+1}$ and a sequence of minimal graphs $u_n$ on
$D_n$ satisfying
\begin{enumerate}
\item $\|u_{n+1}-u_n\|_{{\cal C}^2(K_n)}<\varepsilon_n$,
\item the conformal modulus of the minimal annulus on $K_{i+1}-\mbox{int}(K_i)$ for the graph $u_{n}$ is greater than
one for each $1\leq i\leq n-1$, where $\mbox{int}(K_i)$ denotes the interior of
$K_i$,
\item the angle between two consecutive vertices of the ideal polygon $\partial D_n$
is less than $\pi/2^{n-1}$.
\end{enumerate}

The third condition is clear for $n=1$ since $p_0\in D_1$. Thus, we assume that
there exists the sequence $(D_i,u_i,K_i)$ satisfying the three previous conditions
for $1\leq i\leq n$ and we obtain $(D_{n+1},u_{n+1},K_{n+1})$.

Let $x,y$ be the vertices of a side of $\partial D_n$, and ${\cal I}$ the arc
between $x$ and $y$ that contains no other vertex of $\partial D_n$. We choose the
unique point  $z\in{\cal I}$ such that the angle between $x$ and $z$ agrees with the
angle between $y$ and $z$. Thus, the angle between $x$ and $z$ is less than
$\pi/2^{n}$. Now, from Proposition \ref{is}, there exists $w\in{\cal I}$ such that
the domain bounded by the quadrilateral with vertices $x,y,z,w$ is a Scherk domain.
Moreover, the angle between two consecutive vertices is less than $\pi/2^{n}$.

We attach to each side of $\partial D_n$ an ideal quadrilateral constructed as
above. Then we use Proposition \ref{p2} and perturb all the pairs of sides of
$\partial D_n$ to obtain an ideal Scherk graph $u_{n+1}$ on a larger domain
$D_{n+1}$. This perturbation of the vertices can be done as small as necessary so
that Conditions 1, 3 are satisfied, and also Condition 2 for $1\leq i<n$.

Now, we use the following Lemma. We refer the reader to \cite{CR} for its proof.
\begin{lem}
Every ideal Scherk surface is conformally equivalent to the complex plane.
\end{lem}

Hence, the minimal graph $\Si$ of $u_{n+1}$ is conformally the complex plane. Let
$\Si_0\subseteq\Si$ be the graph of $u_{n+1}$ on the interior of $K_n$. Thus, we can
choose a closed disk $\Si_1\subseteq \Si$ containing $\Si_0$ in its interior such
that the conformal modulus of $\Si_1-\Si_0$ is greater than one. Then, we take
$K_{n+1}$ as the projection of $\Si_1$. In addition, we can enlarge $K_{n+1}$, if
necessary, in such a way that $ K_{n+1}$ contains $\widehat{D}_{n+1}\cap B(p_0,n)$,
where $\widehat{D}_{n+1}$ is the set of points in $D_{n+1}$ a distance greater than
1 to its boundary and $B(p_0,n)$ the geodesic disk centered at $p_0$ of radius $n$.
Thus, Condition 2 is also satisfied.

Observe now that $\m=\cup_{n\geq1}D_n$. This is a straightforward consequence of
Condition 3, since the set of vertices of the domains $D_n$ is dense in $\mi$. In
addition, from the condition between the distance of $\partial K_n$ and $\partial
D_n$ one has that $\m=\cup_{n\geq1}K_n$.

Once we have obtained the previous sequence, we can get the desired entire minimal
graph. Since $u_n(p)$ is a Cauchy sequence for any $p\in\m$, we obtain an entire
minimal graph $u$. On the other hand, on each compact set $K_{i+1}-\mbox{int}(K_i)$
the sequence $u_n$ converges uniformly to $u$ in the ${\cal C}^2$--topology. Hence,
the conformal modulus of the minimal graph of $u$ on $K_{i+1}-\mbox{int}(K_i)$ is at
least one. So, using the Gr\"otzsch Lemma \cite{V}, the conformal type of the
minimal graph of $u$ is the complex plane.
\end{proof}

We also construct harmonic diffeomorphisms from the unit disk onto $\m$ by solving a
Dirichlet problem at infinity.
%Finally, we will pay attention to harmonic diffeomorphisms from the unit disk onto
%$\m$ and its relation with entire minimal surfaces in $\mm$. Thus, we will show
\begin{teo}\label{chuliperi}
Let $\Upsilon$ be a continuous Jordan curve in the cylinder $\mi\times\r$, which is
a vertical graph. Then, there exists a unique entire minimal graph on $\m$ having
$\Upsilon$ as its asymptotic boundary. Moreover, the conformal structure of this
graph is that of the unit disk.
\end{teo}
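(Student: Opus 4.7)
The Jordan curve $\Upsilon$ is the graph over $\mi$ of a continuous function $\varphi:\mi\fl\r$; set $M=\sup_{\mi}|\varphi|$. Fix a basepoint $p_0\in\m$, and for each $n\geq 1$ solve the classical Dirichlet problem for the minimal surface equation on the geodesic disk $D_n=B(p_0,n)$ with boundary datum $\varphi_n(q)=\varphi(x(q))$, where $x(q)\in\mi$ is the ideal endpoint of the geodesic ray from $p_0$ through $q\in\partial D_n$. The solutions $u_n\in{\cal C}^0(\overline{D_n})$ satisfy $|u_n|\leq M$ by comparison with the constant graphs $\pm M$. Uniform interior curvature estimates for stable minimal graphs in $\mm$ give ${\cal C}^{2,\alpha}_{\mathrm{loc}}(\m)$ bounds independent of $n$, so along a subsequence $u_n\fl u$ in ${\cal C}^2_{\mathrm{loc}}(\m)$, producing an entire bounded minimal graph $u$ with $|u|\leq M$.

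\textbf{Asymptotic boundary values via halfplane barriers.} Fix $x_0\in\mi$ and $\varepsilon>0$, and choose $y',z'\in\mi$ on opposite sides of $x_0$, close enough to $x_0$ that the open arc ${\cal U}$ of $\mi$ from $y'$ to $z'$ through $x_0$ satisfies $|\varphi-\varphi(x_0)|<\varepsilon$ on ${\cal U}$, and that $p_0$ lies in the component of $\m-[y',z']$ opposite to $x_0$. Let $\g=[y',z']$ and $\Omega$ the component of $\m-\g$ with $x_0\in\partial_\infty\Omega$; by convexity, $\partial_\infty\Omega=\overline{\cal U}$. Proposition \ref{megascherk} furnishes a positive minimal graph $\widetilde h_\g$ on $\Omega$ equal to $+\infty$ on $\g$ and tending to $0$ as $d(\,\cdot\,,\g)\fl+\infty$. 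Set $u^+:=\varphi(x_0)+\varepsilon+\widetilde h_\g$. For any $q\in\partial D_n\cap\overline\Omega$, the geodesic ray from $p_0$ crosses $\g$ exactly once and then remains in $\overline\Omega$, so $x(q)\in\overline{\cal U}$; hence $\varphi_n(q)\leq\varphi(x_0)+\varepsilon\leq u^+(q)$. On $\g\cap D_n$, $u^+=+\infty\geq u_n$. The comparison principle yields $u_n\leq u^+$ on $\Omega\cap D_n$, and in the limit $u\leq u^+$ on $\Omega$. Because $x_0\neq y',z'$, $d(p,\g)\fl+\infty$ as $p\fl x_0$ in the cone topology, so $\widetilde h_\g(p)\fl 0$ and $\limsup_{p\to x_0}u(p)\leq\varphi(x_0)+\varepsilon$. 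A symmetric lower Scherk barrier produces the matching $\liminf$, so $u$ extends continuously to $\m^*$ with $u|_{\mi}=\varphi$.

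\textbf{Uniqueness and conformal type.} If $u,v$ both solve the problem, $w=u-v$ extends continuously to the compact space $\m^*$ vanishing on $\mi$; for any $\varepsilon>0$ pick $R>0$ with $|w|<\varepsilon$ on $\m-B(p_0,R)$, and the generalized maximum principle of Section 3 applied on $B(p_0,R)$ forces $|w|\leq\varepsilon$ on $\m$, hence $u\equiv v$. The graph $\Si$ of $u$ is simply connected and non-compact, so conformally either $\c$ or the unit disk. The ambient vertical coordinate $t$ restricts to a harmonic function on the minimal surface $\Si\subseteq\mm$ (since $\Delta_\Si t=0$ follows from $\Delta_{\mm} t=0$, $\mathrm{Hess}_{\mm}\,t=0$ and $H_\Si=0$), and it is bounded by $M$. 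When $\varphi$ is non-constant, $u$ is non-constant by the asymptotic analysis, so $t|_\Si$ is a non-constant bounded harmonic function and Liouville rules out $\c$; when $\varphi$ is constant, uniqueness makes $\Si$ isometric to $\m$, which is itself conformally the unit disk, $\m$ being a Cartan-Hadamard surface with $K_\m\leq a<0$.

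\textbf{Main obstacle.} The critical step is the halfplane barrier argument: choosing $y',z'$ close enough to $x_0$ that $p_0\notin\overline\Omega$ is what forces every $q\in\partial D_n\cap\overline\Omega$ to satisfy $x(q)\in\overline{\cal U}$, which in turn allows $u^+$ to dominate $u_n$ on the curved part of $\partial(\Omega\cap D_n)$ and ultimately delivers the sharp asymptotic bound $\varphi(x_0)+\varepsilon$ via the decay of $\widetilde h_\g$ at infinity in the cone topology.
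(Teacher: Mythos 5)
Your proof is correct and follows essentially the same strategy as the paper's: exhaust $\m$ by geodesic disks, solve on each, extract a limit, pin down the asymptotic boundary using the Scherk halfplane barriers of Proposition~\ref{megascherk}, and conclude uniqueness from the maximum principle and conformal type from harmonicity and boundedness of the height function. The differences are small and technical: the paper produces the approximating graphs $u_n$ by solving the Plateau problem for an explicit boundary curve $\Upsilon_n$ on the geodesic circle and invoking Rado's theorem, then approximates $\varphi\circ h$ uniformly by $C^2$ functions, whereas you solve the Dirichlet problem directly with boundary datum $\varphi(x(q))$ (which implicitly requires solvability on geodesic disks, a standard fact given their convexity); and the paper phrases the boundary-at-infinity argument as showing points of $\mi\times\r$ off $\Upsilon$ are excluded, whereas you establish two-sided convergence $u(p)\to\varphi(x_0)$ directly with upper and lower barriers, which is a bit cleaner since it avoids the implicit observation that the asymptotic boundary of a bounded entire graph projects onto all of $\mi$. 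Both use the same barrier $\widetilde h_\g+\text{const}$ on a halfspace cut off by a geodesic with ideal endpoints near $x_0$, with the same decay property to pass to the limit; your explicit case split for constant $\varphi$ in the conformal-type step is a reasonable clarification of the paper's terser assertion.
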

\begin{proof}
Let $\varphi:\mi\fl\r$ be the continuous function whose graph is $\Upsilon$. Let us
fix a point $p_0\in\m$. Consider for any unit tangent vector $v$ at $p_0$, the
unique geodesic $\gamma_v(t)$ satisfying $\gamma_v(0)=p_0$ and $\gamma'_v(0)=v$, and
$h:\s^1_{p_0}\fl\mi$ the homeomorphism given by $h(v)=\gamma_v(+\infty)$.

For the continuous function $\varphi\circ h:\s^1_{p_0}\fl\r$, we consider a sequence
of ${\cal C}^2$--functions $\varphi_n:\s^1_{p_0}\fl\r$ converging uniformly to
$\varphi\circ h$. Then, for any positive integer $n$ we consider the graph on the
geodesic circle centered at $p_0$ of radius $n$ given by the curve
$\Upsilon_n(v)=(\gamma_v(n), \varphi_n(v))$, $v\in\s^1_{p_0}$.

Let $\Si_n$ be the minimal surface in $\mm$ obtained as the Plateau solution with
boundary $\Upsilon_n$. The surface $\Si_n$ can be seen as a graph $u_n$ on the
geodesic disk centered at $p_0$ and of radius $n$, by Rado's theorem. Since the
horizontal slices are minimal surfaces, from the maximum principle, the sequence
$\{u_n\}$ is uniformly bounded on compact subsets of $\m$. Thus there is a
subsequence converging to a entire minimal solution $u:\m\fl\r$, uniformly on
compact subsets of $\m$. Let $\Sigma$ be the entire minimal graph given by $u$.

We now prove that the asymptotic boundary of $\Sigma$ is $\Upsilon$. For that,
observe that we only need to show that if $q$ is a point in $\mi\times\r$ such that
$q\notin \Upsilon$ then $q$ does not belong to the asymptotic boundary of $\Sigma$.

Consider $q=(x_0,r)\in\mi\times\r$. We assume, for instance, $r>\varphi(x_0)$. Take
$\varepsilon=(r-\varphi(x_0))/2>0$ and $v_0=h^{-1}(x_0)$. Then, from the uniform
convergence of $\varphi_n$ to $\varphi\circ h$ and the continuity of $\varphi$, we
can assure the existence of $\delta >0$ and $n_0$ such that for all $w\in\s^1_{p_0}$
with $\|w-v_0\|\leq\delta$ and $n\geq n_0$
$$
|\varphi_n(w)-\varphi( h(v_0))|\leq\varepsilon.
$$

Let $w_1,w_2\in\s^1_{p_0}$ be the unit vectors at a distance $\delta$ from $v_0$.
Let $\Omega\subseteq\m$ be the halfspace determined by the geodesic $\alpha$ joining
the points at infinity $h(w_1)$ and $h(w_2)$ and having $x_0$ in
$\partial_{\infty}\Omega$ (cf. Figure 10). From Proposition \ref{megascherk}, there
exits a Scherk type graph $v$ on the halfspace $\Omega$ with boundary data $+\infty$
on $\alpha$ and $(r+\varphi(x_0))/2$ on $\partial_{\infty}\Omega$.

%\begin{center}
%\DeclareGraphicsExtensions{jpg}
%\includegraphics[height=5cm]{dibujo29b.jpg}\\
%Figure 10.
%\end{center}
\begin{figure}[h]
\mbox{}
\begin{center}
\includegraphics[height=5cm]{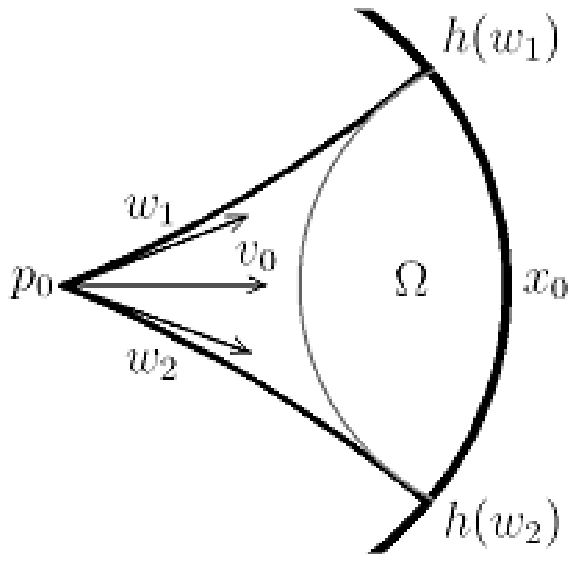}\\
Figure 10.
\end{center}
\end{figure}

From the maximum principle, $u_n\leq v$ on $\Omega$ for all $n\geq n_0$. In
particular, $q=(x_0,r)$ does not belong to the asymptotic boundary of the entire
graph $\Sigma$. Thus, the asymptotic boundary of $\Sigma$ is $\Upsilon$.

The uniqueness part of the Theorem is a straightforward consequence of the maximum
principle. In addition, since the height function is harmonic  and bounded for the
entire minimal graph $\Si$, then its conformal structure must be that of the unit
disk.
\end{proof}

\end{document}